\newtheorem{theorem}{Theorem}[section]
\newtheorem{lemma}[theorem]{Lemma}
\newtheorem{proposition}[theorem]{Proposition}
\newtheorem{definition}[theorem]{Definition}
\newcommand{\cF}{{\cal F}}
\newcommand{\cP}{{\cal P}}
\def\cF{{\mathcal {F}}}
\def\cR{{\mathcal R}}
\def\cB{{\mathcal B}}
\def\cP{{\mathcal P}}
\def\di{\diamond}
\def\R{{ \mathbb{R}}}
\newcommand{\RE} {{\rm I \kern-2.8pt R} }
\begin{document}

\begin{frontmatter}

\title{Evolutionary Dynamics and Lipschitz Maps}

\author{John Cleveland } 
\address{1200 US Highway 14 West, Richland Center WI 53581 }




\begin{abstract}
 In \cite{ CLEVACKTHI, CLEVACK} an attempt is made to find a comprehensive mathematical framework in
which to investigate the problems of well-posedness, asymptotic
analysis and parameter estimation for fully nonlinear evolutionary
game models. A theory is developed as a dynamical system on the state space of finite signed Borel measures under the weak star topology. Two drawbacks of the previous theory is that the techniques and machinery involved in establishing the results are awkward and have not shed light on the parameter estimation question.   For example, in \cite{CLEVACK} the proof for the existence of the dynamical system  is obtained via a fixed point argument using the total variation topology, however, the continuity of the model is established in the $weak^* $ topology. This has caused some confusion. I have remedied this by making all the vital rates Lipschitz and the dynamical system is defined on the dual of the bounded Lipschitz maps, a Banach space. I introduce a method of multiplying a functional by a family of functionals. This multiplication behaves nicely with respect to taking normed estimates. It allows us to form a semiflow that  is locally Lipschitz, positive invariant, and covers all cases: discrete, continuous, pure selection, selection mutation and measure valued models. Under biologically motivated assumptions the model is  uniformly eventually bounded. This remedies both the above problems as only one norm  is used, this norm induces the  $weak^*$ topology on the positive cone of measures and since we have a norm and local Lipschitzity  we can form a theory of Parameter Estimation.
\end{abstract}

\begin{keyword}
 Evolutionary game models, selection-mutation, measure valued model, continuous dependence, darwinian evolution, Lipschitz maps
\MSC[2010] 91A22 \sep  34G20 \sep  37C25 \sep
92D25.
\end{keyword}

\end{frontmatter}


\section{ Introduction}

Evolutionary games (EG)s are a great unifying tool of population dynamics. Models ranging from a basic homogeneous parameter logistic model, to a parametric heterogeneous juvenile adult or consumer resource population model can be modeled effectively \cite{AFT,JC3, JC2}. For these type models, effective modeling means that one can study well posedness, asymptotic analysis and parameter estimation in one abstract setting. An initial attempt was made in \cite{CLEVACK}, however, this involved using several different topologies to establish the dynamical system and it shed no light on the parameter estimation question. We remedy the first problem in this manuscript and provide an introduction to a remedy for the second. The remedy consists of formulating a dynamical system on the dual of the bounded Lipschitz maps and making all of the vital rates Lipschitz continuous mappings.

As a brief recap, we mention again the reasons for the development of this abstract machinery.  We consider the following EG (evolutionary game) model of generalized logistic growth with pure selection (i.e., strategies
replicate themselves exactly and no mutation occurs)  which was developed and
analyzed in \cite{AMFH}:
\begin{equation}
 \frac{d}{dt} x(t,q) = x(t,q) (
q_1 -q_2 X(t)), \label{logiseq}\end{equation}
 where $X(t) = \int_Q x(t,q) dq$ is the total population, $Q \subset \text{int}(\mathbb{R}_+^2)$ is compact
 and the state space is the set of continuous real valued functions
 $C(Q)$. Each $ q=(q_1, q_2) \in Q$ is a two tuple where $q_1$ is an
 intrinsic replication rate and $q_2$ is an intrinsic mortality
 rate. The solution to this model converges to a Dirac
 mass centered at the fittest $q$-class. This is the class with the highest birth to death ratio
 $\frac{q_1}{q_2}$,
 and this convergence is in a topology called $weak^* $
  (point wise convergence of functions) \cite{AMFH}. However, this Dirac limit is not in the
  state space as it is not a continuous function. It is a measure.  Thus, under this formulation one cannot treat this Dirac mass
  as an equilibrium (a constant) solution and hence the study of linear stability analysis is not possible.

  Other examples
  for models developed on classical state spaces such as $L^1(X,\mu)$ that demonstrate the emergence of Dirac measures in the asymptotic limit from smooth initial densities are given in \cite{AFT,AMFH,calsina,CALCAD,GVA,P,GR1,GR2}.
  In particular, how the measures arise naturally in a biological and adaptive dynamics environment is illustrated quite well in \cite[chpt.2]{P}.
   These examples show that the chosen state space for formulating such selection-mutation models must \textbf{contain} densities and Dirac masses and the topology used must \textbf{contain the ability to demonstrate convergence} of densities to Dirac masses.
This process is illustrated in the precursors to this work in \cite{CLEVACKTHI, CLEVACK}. However in this manuscript I shall concentrate on the problems mentioned in the first paragraph.
\begin{definition}
If $X$ is a metric space, and $ J \subset \R_+$ is an interval that contains zero then a map $$ \Phi: J \times X \rightarrow X $$ is called a local (global autonomous) semiflow if:
\begin{itemize}
\item[(1)] $\Phi(0;x) =x.$
\item[(2)] $\Phi(t+s; x) = \Phi(t; \Phi(s;x))$,  $\forall t, s \in J , ~x \in  X.$
\end{itemize}
\end{definition}

 If $f : X \rightarrow X $ is a locally Lipschitz vectorfield and $ x(t)$ is the unique solution to  $x'(t) = f(x)$  and $ x(0) =x_0$. Then we obtain a global autonomous semiflow $\Phi(t; x_0) = x(t). $ This semiflow is always continuous \cite[ Chpt.1, pg.19]{Thi03}.

 In particular, in the present paper we let $ [ X, D_X ] $ be our metric space where  $$ X = BL^* \times L(Q;\mathcal{P}^*). $$ Here $Q$ is a compact metric space and $ BL=BL(Q)$ are the bounded Lipschitz maps on $Q.$ $BL^*$ is the dual of $BL$ and  $ L(Q;\mathcal{P}^*)$ are the Lipschitz maps into $\mathcal{P}^*$. Elements of $\mathcal{P}^*$ are to be thought of as generalizations of probability measures.  They are elements of $BL^*$ of norm 1. $ \gamma \in L(Q;\mathcal{P}^*)$ is the parameter of our system and is to be thought of as a family of ``probability distributions" indexed by $ Q$. It is the \textbf{mutation kernel}. The metric $ D_X$ satisfies
  $$ D_X( (u_1, \gamma_1),( u_2, \gamma_2)) = \|u_1 -u_2 \|_{BL}^* + \|\gamma_1 -\gamma_2\|_{\infty}^*. $$


 (See subsection \ref{technical} for the definitions of  $ \| \cdot \|_{BL}^*$  and $ \|\cdot \|_{\infty}^*.$ )

  In order for a semiflow to model our Evolutionary Game it must satisfy the \textbf{ constraint equations}. In other words our (EG) model is
 an ordered triple $$(Q,\Phi(t;\cdot), \mathcal{F})$$  subject to:

\begin{equation}\label{mconstraint}\frac{d}{dt}\Phi(t;x)[g]= \cF[\Phi(t;x)][g], \text{ for every}
~~g \in BL(Q). \end{equation}
Here $Q$ is the strategy
(compact metric) space, $\Phi(t;x)$
is a semiflow on $X$ and $$\cF : X \rightarrow BL^*$$  is a  vector field (parameter dependent) such that $ \Phi$ and $\cF$
satisfy equation \eqref{mconstraint}. 

Our original models in this field all showed convergence of a semiflow generated from an initial value problem. The equilibrium point was a dirac mass. The obvious choice for state space was $\mathcal{M}_+$, under the $weak^*$ topology. Where $\mathcal{M}_+$  denotes the cone of the positive measures. However, $ \mathcal{M}_+$ is a complete metric space and not a Banach Space.  With slight modifications of the definitions one could use the techniques of either mutational analysis \cite{JPA1,JPA2, Lorenz} or differential equations in metric spaces \cite{Tabor02} or arcflows of arcfields \cite{CC,CB}  to generate a semiflow that satisfies the equivalent of the initial value problem in semiflow theory language.

The method employed here is  that we find a Banach Space,  $BL^*$ containing $ \mathcal{M}_+ $ as a closed metric subspace. Then we extend the constraint equation on $\mathcal{M}_+ $ to one on $BL^*$. The semiflow resulting from the solution of the generalized constraint equation has $ \mathcal{M}_+ $ as a forward invariant subset and hence we generate our semiflow on $ \mathcal{M}_+ $. This is essentially the method employed here. However, using this approach we see that we generate a semiflow on any forward invariant subset of $X$.

The main contributions of the present work
are as follows: \begin{enumerate}
\item  We form a well posed model of a general evolutionary game as a semiflow on a suitable  metric space  that covers discrete, continuous, pure selection, selection mutation, and measure valued models. It should be noted that the pure selection kernel is Lipschitz and Lipschitz continuous function are dense in the continuous ones, since $Q$ is compact.
  \item  Unlike the linear mutation term commonly used in the literature, we allow for nonlinear (density dependent) mutation
term that contains all classical nonlinearities, e.g., Ricker,
Beverton-Holt, Logistic; \item  Unlike the one or two dimensional strategy spaces used in the literature,  we allow for
a strategy space $Q$ that is possibly infinite dimensional. In particular, we assume that $Q$ is a compact metric space. \item  Our state space has a norm and hence all estimates in the state space are performed with one metric which is a norm. This fact allows us to construct a theory of parameter estimation. This latter remedies the problems with the previous approaches.
\end{enumerate}

This paper is organized as follows. In section 2 we demonstrate how
to proceed from a density model to a  $ BL^*$ valued model and thus demonstrate the derivation of the constraint equation.  In section 3
we establish some background material including notation and technical definitions. In section 4 we prove positive invariance and well posedness.
  In section 5 we mention and demonstrate the unifying power of this methodology and mention that with the new formulation we still have both pure selection and selection mutation formulated in a continuous manner since the pure selection kernel is  Lipschitz. In section 6 under a biologically motivated assumption we show uniform eventual boundedness.  In section 7 we provide concluding remarks.


\section{ The Constraint Equation} \label{constraint}
This abbreviated section is taken from \cite{CLEVACK} just for background and the defining of the constraint equation.  For the full account, see \cite{CLEVACK}. We begin with a density version of the constraint equation \eqref{dsmm}.
 To this end take as the strategy space $Q$ a compact subset of ${\rm int}(\mathbb{R}^n_{+})$ (the interior of the positive cone of
$\mathbb{R}^n$)  and  consider the following density IVP (initial value problem) :
\begin{equation} \left\{
\begin{array}{l}
 \frac{d}{dt}x(t,q) =
\underbrace{\int_Q B(X(t),\hat q) p(q,\hat q) x(t,\hat q) d \hat
q}_{\mbox{Birth term}} - \underbrace{D(X(t),q))
x(t,q)}_{\mbox{Mortality term}}\\
 x(0,q)= x_{0}(q).
 \label{dsmm} \end{array}
\right.\end{equation} Here, $X(t) = \int_Q
x(t,q) dq$ is the total population, $B(X,\hat q)$ represents the
density-dependent replication rate per $\hat  q $ individual, while $D(X,q)$ represents the density-dependent mortality rate per $q$ individual.
 The probability density function $p(q,\hat q)$ is the
selection-mutation kernel. That is, $p(q, \hat q) d q$ represents
the probability that an individual of type $\hat q$ replicates an
individual of type $q$ or the proportion of $\hat q$'s offspring
that belong to the $dq$ ball. Hence, $B(X(t),\hat q)p(q, \hat q)
dq$ is the offspring of $\hat q$ in the $dq$ ball and $
B(X(t),\hat q) p(q,\hat q)dq x(t,\hat q) d \hat q$ is the total
replication of the $d\hat q$ ball into the $dq$ ball. Summing
(integrating) over all $ d\hat q$ balls results in the replication
term. Clearly $D(X(t),q) x(t,q)dq$ represents the mortality in the
$dq$ ball. The difference between birth and death in the $dq$ ball
gives the net rate of change of the individuals in the $dq$ ball,
i.e., $\frac{d}{dt}x(t,q)dq .$ Dividing by $dq$ we get \eqref{dsmm}.

We  point out that \textbf{formally}, if we let $p(q,\hat q)
=\delta_{\hat q}(q)=\delta_{q}(\hat q)$ (the delta function is even)
in \eqref{dsmm} then we obtain the following pure selection
(density) model
\begin{equation} \left \{ \begin{array}{l}
\displaystyle \frac{d}{dt}x(t,q) =   x(t,q) (B(X(t),q) -
D(X(t),q))\\
 x(0,q)= x_{0},
 \end{array} \right .
 \label{pureselection}
\end{equation}
of which equation (1) in \cite{AFT} is a special case. Indeed if
$p(q,\hat q)dq = dq\delta_{\hat q} ( q)$ then this means that the
proportion of $\hat q$'s offspring in the $dq$ ball is zero unless
$q= \hat q$ in which case this proportion is $dq,$ i.e., individuals
of type $\hat q$ only give birth to individuals of type $\hat q$.

Multiplying both sides of \eqref{dsmm} by a test function $ g \in C(Q)$ and integrating over $Q$ we obtain:
$$ \int_Qg(q) \frac{d}{dt}x(t,q)dq =  \int_Q
\bigl[\int_Q B(X(t),\hat q) p(q,\hat q) x(t,\hat q) d \hat q  -
D(X(t),q) x(t,q)\bigr]g(q)dq .$$ Changing order of integration we get
$$ \begin{array}{lll}
\int_Qg(q) \frac{d}{dt}x(t,q)dq &=& \int_Q B(X(t),\hat q)\bigl[\int_Q g(q)
p(q,\hat q)dq\bigr]
x(t,\hat q) d \hat q - \int_Q g(q) D(X(t),q) x(t,q)dq\\
&=& \int_Q B(X(t),\hat q)\gamma(\hat q) [g] x(t,\hat q) d \hat q -
\int_Q g(q)D(X(t),q) x(t,q)dq,
\end{array}
$$
where $\gamma(\hat q)[g]=\int_Q  g(q)p(q, \hat q) dq. $ See \cite{CLEVACK} for a more biological interpretation of the mutation kernel.

If $\mu(t)(dq) = x(t,q) dq$  we obtain the following measure valued dynamical system
\begin {equation} \left\{\begin{array}{ll}\label{M1}
 \displaystyle \frac{d}{dt}{\mu}(t, u, \gamma)[g] = \int_Q B(\mu(t)(Q),\hat q ) \gamma(\hat q)[g]\mu(t)(d\hat q)\\
\hspace{1.2 in} - \displaystyle \int_Q  g(\hat q)D( \mu(t)(Q),\hat q)
\mu(t)(d\hat q) =  {F} (\mu, \gamma)[g] \\
\mu(0, u,\gamma)=u.
\end{array}\right.\end{equation}
if $ g \in C(Q).$

If we properly define the $\bullet$ operation below, then we obtain the following  $BL^*$ valued model:
\begin {equation} \left\{\begin{array}{ll}\label{BLvalued}
 \displaystyle \frac{d}{dt}{\mu}(t, u, \gamma)[g] & =  B(\mu(t)(\textbf{1}),\cdot) \gamma(\cdot)\bullet\mu(t) [g]
 - \displaystyle  D( \mu(t)(1),\cdot)\bullet\mu(t)[g] \\
 &= \Bigl(B(\mu(t)(\textbf{1}),\cdot) \gamma(\cdot) [g] -  D( \mu(t)(1),\cdot)\Bigr)\bullet\mu(t)[g]\\
 & =  {F} (\mu, \gamma)[g] \\
\mu(0, u,\gamma)=u.
\end{array}\right.\end{equation}
if $ g \in BL(Q).$

Suppose  $ \mu $ is a solution to \eqref{BLvalued}. Then define
  $$ \Phi(t;(u, \gamma))= [ \mu(t,u,\gamma), \gamma] $$  where $$ \frac{d\Phi}{dt}(t;( u, \gamma))= [\frac{d\mu}{dt}(t,u,\gamma), \gamma]. $$ If
 $ \cF( \mu, \gamma) = [F(\mu, \gamma), \gamma] $ where $F$ is as in \eqref{BLvalued}, then

 \begin{equation*} \begin{split}
 \cF[ \Phi(t;(u, \gamma))]  & = \cF [ \mu(t,u, \gamma), \gamma] =  [F(\mu(t,u, \gamma), \gamma] =[\frac{ d\mu}{dt}(t,u,\gamma), \gamma]
   = \frac{d\Phi}{dt}(t; (u, \gamma))
 \end{split} \end{equation*}
 or for  $ x =( u, \gamma) $
 \begin{equation}\label{CONSTRAINT} \begin{split}    \frac{d\Phi}{dt}(t;x)& = \cF[ \Phi(t; x)] \mbox { ~ and } \\
  \Phi(0;x) = x \end{split} \end{equation}

 \eqref{CONSTRAINT}  is the $BL^*$ valued constraint equation.

\section{Preliminary  Material }\label{PM}

We begin modeling with  $([Q,d], \cB(Q),P)$ where $[Q,d]$ is a compact metric space, $\mathcal{B}
(Q)$ are the Borel sets on $[Q,d]$ and $P$ is a probability measure on the Measurable Space $([Q,d], \mathcal{B}(Q))$ representing an initial weighting on the strategies. One can think of $Q$ as a compact subset of $\mathbb{R}^n$ and $P$ as a probability measure (initial weighting) on this set.
$Q$ above is used to model the space of strategies. What we seek as a model of our game is a semiflow subject to the constraint equation \eqref{CONSTRAINT} which will follow easily from a parameter indexed family of solutions to \eqref{M1} above.

\subsection{Birth and Mortality Rates}
Concerning the birth and mortality densities $B$ and $D$ we make assumptions similar
to those used in \cite{AFT}:
\begin{itemize}
\item[(A1)] $B: \mathbb{R}_+ \times Q \rightarrow \mathbb{R}_+$ is locally Lipschitz
continuous and $B( \cdot,q)$ nonincreasing  on $\mathbb{R}_+$ for any $q \in Q.$
\item[(A2)] $D: \mathbb{R}_+ \times Q \rightarrow \mathbb{R}_+$ is locally Lipschitz continuous,
$ D( \cdot,q)$ is  nondecreasing on $\mathbb{R}_+$ for any $ q \in Q$, and $ \inf_{q \in Q} {D(0, q)} =\varpi
>0 $. (This means that there is some inherent mortality not density
related)
\end{itemize}
These assumptions are of sufficient generality to capture many nonlinearities of classical population dynamics including Ricker,
Beverton-Holt, and Logistic (e.g.,  see \cite{AFT}).

 \subsection{Technical Preliminaries for Bounded Lipschitz Formulation } \label{technical}


If $ [Y, \|\cdot\|_Y] $ is  a Banach Algebra, $ C(Q;Y)$ denotes the continuous $Y$- valued maps under the uniform norm,
 $$  \|f\|_{\infty}   = \sup_{q \in Q} \|f \|_{Y} . $$

Two important subspaces are  $$ L(Q;Y)[M] \subset L(Q;Y) \subset C(Q;Y).$$
Where $L(Q;Y)$ is the dense subspace of all $Y$ -valued   Lipschitz maps and $L(Q;Y)[M]$ is the locally compact subspace of Lipschitz maps with Lipschitz bound smaller than or equal to $M$.

 If no range space is specified then $ C(Q) =C(Q;\mathbb{R})$, denotes the Banach space of continuous real valued functions on $Q$.
 The two important subspaces mentioned above are denoted as $L$ and $L[M]$ respectively.

 $L$ also has a finer structure. Indeed, if $ f \in L, $ define

$$ \|f\|_{Lip}= \sup \left\{ \frac{\|(f(x)-f(y))\|_Y}{d(x,y)}: x,y \in Q, x \neq y \right\}.$$

  Under the  norm $\| f \|_{BL} = \|f\|_{\infty}+ \|f\|_{Lip}$, $L$ becomes a Banach space denoted  $[BL, \|\cdot \|_{BL}]. $

  $ [BL^*, \|\cdot  \|_{BL}^*] $ denotes the continuous dual of this Banach Space and it has a closed convex subspace \begin{equation}\cP ^* = \{ \mu \in BL^*_+ ~| ~ \|\mu\|_{BL}^* = 1 \} . \begin{footnote}{ If $S$ is a subset of a Banach space, then $S_+$ is the intersection of $S$ with the positive cone. } \end{footnote} \end{equation}

  $L$ and $BL$ are the same set, the topology is just different.

Crucial to the success of our modeling efforts is the forming of the parameter space,  $ L(Q;\cP^*) \subset C(Q; BL^*),$ which models the mutation kernel.  It is a convex subset of   $C(Q; BL^*).$ \\

\emph{Some Algebra :} \\

Firstly we note that both $[C(Q;Y), \|\cdot \|_{\infty}] $ and  $ [BL(Q;Y),\|\cdot \|_{BL}] $ are also Banach Algebras and we have the inequality \begin{equation} \label{BA} \|fg \| \le \|f \| \|g \| \end{equation} holding in each space.

Secondly,  we  view $ \gamma \in L(Q; BL^*)$   as a family of bounded linear functionals indexed by $Q$. It has properties that need elucidating for our modeling purposes.  $ L(Q; BL^*)$ is a unital BL- module. Indeed if $f,~ g\in BL,$  $ \gamma \in L(Q;BL^*) $
 \begin{equation} \label{action0} (f \cdot \gamma) (q)[g] =f(q)\gamma(q)[g] \mbox{ and } \| f\gamma \|^*_{\infty}\le \|f\|_{\infty} \|\gamma\|_{\infty}^* \begin{footnote}{ If $ \gamma \in C(Q;BL^*)$, $ \| \gamma\|_{\infty}^* = \sup_{q \in Q} \|\gamma(q)\|_{BL}^* $} \end{footnote}.\end{equation}  We will denote this action simply as $ f\gamma$ since it is just pointwise multiplication. So one can multiply a family of functionals by a Lipschitz map and obtain another family of functionals. Moreover,
the new \textbf{uniform} normed product is no larger than the \textbf{uniformed} product of the norms.

Thirdly,
 $$ L \hookrightarrow L(Q;BL^*)  \mbox { by }   f \mapsto f(\cdot)\delta_{(\cdot)} $$  is an isometry. Where $ \delta_{(\cdot)}$ is the delta functional.

 This allows us to view a Lipschitz function, $f$,  as a \textbf{family} of bounded linear functionals on $ BL$  indexed by $ Q.$ Moreover this viewing preserves the uniform norm, i.e.
  \begin{equation*} \|f\|_{\infty} = \|f(\cdot)\delta_{(\cdot)}\|_{\infty}^*. \end{equation*}

 Fourthly, we need to ``multiply" a functional by a family of functionals.   Let $ M^*_b:= [M_{b}^*(BL;\R), \|\cdot\|^*_{BL}], $ denote the normed  $\mathbb{R}$ -Algebra of bounded maps of $BL$  into $\R$ where we have pointwise addition and multiplication and the norm defined as

  $$ \|\mu \|_{BL}^* = \sup_ {  g \in BL, g \ne 0}   \frac{  | \mu(g)|  }{ \|g\|_{BL}}  $$  

 If $$ \Sigma = [BL (Q; M^*_b), \|\cdot\|_{BL} ]$$  then $\Sigma $ is a $\mathbb{R}$- Algebra  under pointwise addition and multiplication and  $ M^*_b(BL;\R)$ is a $\Sigma$- module. Indeed,  under the action
  $$ \bullet: \Sigma \times M^*_b(BL;\R) \rightarrow M^*_b(BL;\R)$$ given by

 $$ (\gamma\bullet \mu) [g] = \mu \Bigl[ \gamma(\cdot)[g] \Bigr] ~~ \forall g \in BL, ~ \forall \gamma \in \Sigma  $$ we have an action.
 This is a  bounded Lipschitz   functional since $ \forall g \in BL, \gamma(\cdot)[g]$ is bounded and Lipschitz since $ \gamma \in BL(Q;M^*_{b}).$
 With respect to the normed  product we have \begin{equation} \label{action21} \|\gamma \bullet \mu\|_{BL}^* \le \|\gamma\|_{BL} \|\mu \|_{BL}^* .\end{equation}
  Moreover, if $ \mu \in BL^*_{+}$, \eqref{action21} becomes
  \begin{equation} \label{action21b} \|\gamma \bullet \mu\|_{BL}^* \le \|\gamma\|^*_{\infty} \|\mu \|_{BL}^* .\end{equation} where $$ \| \gamma\|^*_{\infty} = \sup_{ q \in Q} \|\gamma(q)\|_{BL}^*.$$

  $ \bullet$ above allows us to ``multiply" a functional,
   $ \mu \in M_b^*$, by a family of functionals $ \gamma \in \Sigma .$

   This new multiplication gives us some important information about our mutation parameter space $L(Q;BL^*).$

    Indeed,

    \begin{itemize}
    \item[(1)] First notice   $$ L(Q;BL^*) \times  BL^* \subset \Sigma \times M_b^*(BL; \R).$$ If we think of  $L(Q;BL^*)$  as $ [ BL(Q;BL^*), \|\cdot\|_{BL}]$ (same set different topology), then we actually have that $$ \bullet:  BL(Q;BL^*)\times BL^* \rightarrow BL^* $$ by
 \begin{equation}\label{action2} (\gamma \bullet \mu)[g]=\mu\Bigl[\gamma(\cdot)[g]\Bigr] .\end{equation}
  The $\bullet$ operation does \textbf{not }make $BL^* $ into a $BL(Q;BL^*)$- module since $BL(Q;BL^*)$ is not a ring .\begin{footnote}{ Since $ \delta_{(\cdot)} $  acts as a sort of identity. It is more than likely that some sort of convolution product could be placed on $BL^*$ with $ \delta_{(\cdot)} $ being the identity. Then it would be a unital  $BL(Q;BL^*)$ module.} \end{footnote}  However, this restriction of $\bullet$ is \textbf{bilinear}.
   \item[(2)] Also note that if  $f \in BL$, then $f\bullet \mu $ is well defined as well. Indeed, from the thirdly observation in the \emph{Some Algebra} section we view $f$ as the family $\gamma(q) =f(q) \delta_q$, and
\begin{equation} \label{identity} (f\bullet \mu)[g] = (f  \delta )\bullet \mu[g] = \mu \Bigl [ f(\cdot)\delta_{( \cdot)}[g]\Bigr ] = \mu  [ f(\cdot) g( \cdot)] = \mu[fg]. \end{equation}
Furthermore \begin{equation} \label{action11} \|f \bullet \mu\|_{BL}^* \le \|f\|_{BL} \|\mu \|_{BL}^* .\end{equation}
\item[(3)] If  $\gamma \in \Sigma / BL^* $, and $\mu \in BL^*$, then $ \gamma \bullet \mu$ is possibly in $ M^*_{b} / BL^* .$ For example, suppose that $ \gamma \in BL^*,$ then $ e^ \gamma( \cdot) \in M^*_b/ BL^*.$ But even though  $\mu \in BL^*$,  $ (e^{\gamma}(\cdot) \bullet \mu)[g] = \mu [ e^ {\gamma(\cdot)[g]}] $ is possibly an element of $\gamma \in M^*_{b} / BL^*.$ For instance, if $ \gamma(\cdot) = \delta_{(\cdot)}$ and  $ \mu = \delta_{q_0},$ for some $ q_0 \in Q.$
\item[(4)] In all cases $ \bullet $ behaves nicely with respect to norm estimation in all norms. The normed product is no larger than the product of the norms.
\end{itemize}

\emph{Miscellaneous:} \\

 If $ \nu \in BL^*$, $$ B_{a}[\nu] = \{ \mu \in BL^* ~|~~ \|\mu -\nu \|_{BL}^* < a \} .$$

 Below $ L\cP^*[M] = L(Q;\mathcal{P}^*)[M]$ and likewise for $BL\cP^*[M] .$

 $\textbf{0}$ denote the zero functional and $1$ denotes the constant function that takes the value $1$.

For any time dependent mapping, $f(t)$,  we let $f' (t)=\frac{df}{dt} (t)$

\section{ Main Well-Posedness Theorem} \label{WP}

The following is the main theorem of this section.
\begin{theorem}\label{main}   Let  $X = {BL^*}  \times L(Q;\mathcal{P}^*).$ Then $[X, D_X] $ is a metric space where
 $$ D_X( (u_1, \gamma_1),( u_2, \gamma_2)) = \|u_1 -u_2 \|_{BL}^* + \|\gamma_1 -\gamma_2\|_{\infty}^*. $$
Moreover there exists a global autonomous semiflow where
$$ \Phi:{\mathbb {R}_+} \times  X  \to X $$  satisfying the following:
\begin{enumerate}
\item  There exists a  continuous mapping $$ \varphi : \R_+ \times  {BL^*}  \times BL(Q;\mathcal{P}^*) \rightarrow BL^* $$ such that
 $$ \Phi(t; (u, \gamma)) = (\varphi(t,u,\gamma), \gamma).$$

\item For fixed $ (u, \gamma ) \in BL^* \times BL(Q;\mathcal{P}^*)$, the mapping $t \mapsto \varphi(t, u,\gamma) \in C( \R_+; BL^*) $ is the unique \emph{solution} to

 \begin {equation} \left\{\begin{array}{ll}\label{MAIN}
 \displaystyle {\mu'}(t) & =
\Bigl (B(\mu(t)(1), \cdot) \gamma(\cdot) - \displaystyle  D(\mu(t)(1),\cdot)\Bigr )\bullet \mu(t)   \\
 & =  {F} (\mu, \gamma) \\
\mu(0)=u.
\end{array}\right.\end{equation} Moreover, if $$ \cF : X \rightarrow X $$ by
$$  \cF[ \mu, \gamma] =[ F(\mu, \gamma), \gamma] \mbox{ ~~ and ~~ } \Phi'(t;(u,\gamma)) = [ \varphi'(t,u, \gamma), \gamma] $$

Then \begin {equation} \left\{\begin{array}{ll}
 \displaystyle {\Phi'}(t;x) & =  \cF [ \Phi(t;x)]    \\
\Phi(0;x)=x.
\end{array}\right.\end{equation}

\item   If   $ X_+ = BL^*_+ \times L(Q; \mathcal{P}^*)$, then $ X_+$  is forward invariant under $\Phi $ i.e. $\Phi(t; X_+) \subset X_+ ,$ $\forall t \in \R_+ $.

\item  $ \forall N \in \mathbb{N},$ if  $ X_N = [0,N] \times B_N[\textbf{0}]_+ \times L\cP^*[N] ,$  then
 $\Phi $ is Lipschitz continuous on $X_N.$

\end{enumerate}
\end{theorem}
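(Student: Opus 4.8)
The plan is to read \eqref{MAIN} as an autonomous ODE $\mu' = F(\mu,\gamma)$ in the Banach space $BL^*$, with $\gamma$ frozen as a parameter, and to run the classical Picard--Lindel\"of program for a locally Lipschitz vector field on a Banach space: local existence and uniqueness, continuous dependence, then global continuation and the structural read-off of (1)--(4). The metric-space assertion for $[X,D_X]$ is routine and I would dispatch it first: $\|\cdot\|_{BL}^*$ is a norm on $BL^*$ and $\|\cdot\|_{\infty}^*$ is a norm on $C(Q;BL^*)\supset L(Q;\mathcal{P}^*)$, so $D_X$ inherits nonnegativity, symmetry, the triangle inequality and nondegeneracy coordinatewise.

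The computational core is to show that $F(\cdot,\gamma)$ is locally Lipschitz on $BL^*$, uniformly for $\gamma$ in bounded subsets of $BL(Q;\mathcal{P}^*)$. Writing $c(\mu,\gamma):=B(\mu(1),\cdot)\gamma(\cdot)-D(\mu(1),\cdot)$ so that $F(\mu,\gamma)=c(\mu,\gamma)\bullet\mu$, I would use the bilinear split
$$F(\mu_1,\gamma_1)-F(\mu_2,\gamma_2)=\bigl(c(\mu_1,\gamma_1)-c(\mu_2,\gamma_2)\bigr)\bullet\mu_1 + c(\mu_2,\gamma_2)\bullet(\mu_1-\mu_2)$$
and estimate each term with the module inequalities \eqref{action21}, \eqref{action21b}, \eqref{action11} and the Banach-algebra inequality \eqref{BA}. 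The linear functional $\mu\mapsto\mu(1)=\mu[\mathbf{1}]$ is bounded with $|\mu(1)|\le\|\mu\|_{BL}^*$, so on a region where $\|\mu\|_{BL}^*\le N$ the scalar $\mu(1)$ ranges in a compact interval; assumptions (A1),(A2) then make $B(\mu(1),\cdot)$ and $D(\mu(1),\cdot)$ depend Lipschitz-continuously on $\mu$, and since each $\gamma(q)\in\mathcal{P}^*$ has fixed norm the coefficient $c(\mu,\gamma)$ is jointly Lipschitz in $(\mu,\gamma)$ in the relevant norm. Combining the two terms yields $\|F(\mu_1,\gamma_1)-F(\mu_2,\gamma_2)\|_{BL}^*\le C_N(\|\mu_1-\mu_2\|_{BL}^*+\|\gamma_1-\gamma_2\|_{\infty}^*)$ on $X_N$.

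With local Lipschitzity in hand, Picard--Lindel\"of gives a unique maximal solution $t\mapsto\varphi(t,u,\gamma)$; uniqueness for the autonomous IVP forces the semiflow identity, producing property (1) with the second coordinate frozen and property (2) (the constraint equation is then immediate from $\mathcal{F}=[F,\mathrm{id}]$). For forward invariance (3) I would exploit the Duhamel structure of \eqref{MAIN}: the mortality term $-D(\mu(1),\cdot)\bullet\mu$ acts as multiplication by the nonnegative function $D(X(t),\cdot)$ and hence generates a positivity-preserving evolution, while the birth term is a positive operator on $BL^*_+$ since $B\ge0$ and $\gamma(q)\in\mathcal{P}^*\subset BL^*_+$ give $(B\gamma\bullet\mu)[g]=\mu[B(\cdot)\gamma(\cdot)[g]]\ge0$ for $\mu\ge0$, $g\ge0$; thus the mild-form iterates stay in the closed cone $BL^*_+$, which is what (3) asserts. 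Global existence follows by testing \eqref{MAIN} against $g=\mathbf{1}$: since $\gamma(q)[\mathbf{1}]=\|\gamma(q)\|_{BL}^*=1$, the total mass $X(t)=\mu(t)(1)$ obeys
$$X'(t)=\mu(t)\bigl[B(X(t),\cdot)-D(X(t),\cdot)\bigr]\le \bar B\,X(t),\qquad \bar B:=\sup_{q\in Q}B(0,q),$$
using monotonicity of $B,D$ on the cone; as $\|\mu(t)\|_{BL}^*=X(t)$ there, the norm cannot blow up in finite time and the solution extends to all of $\mathbb{R}_+$. Finally a Gronwall continuous-dependence estimate for two trajectories with data in $X_N$, together with the uniform bound on $\|F\|_{BL}^*$ over $X_N$ that controls the $|t-s|$ increment, gives Lipschitz continuity of $\Phi$ on $X_N$, i.e.\ property (4).

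The step I expect to be the main obstacle is the Lipschitz estimate for the coefficient map $\mu\mapsto c(\mu,\gamma)$ in the \emph{full} $\|\cdot\|_{BL}$ norm, because (A1),(A2) only provide joint local Lipschitz continuity of $B,D$ on $\mathbb{R}_+\times Q$, which controls $\|B(X_1,\cdot)-B(X_2,\cdot)\|_{\infty}$ but not the mixed Lipschitz-in-$q$ modulus $\|B(X_1,\cdot)-B(X_2,\cdot)\|_{Lip}$. The way around this is precisely estimate \eqref{action21b}: on the positive cone the $\bullet$-estimates require only the uniform functional norm $\|\cdot\|_{\infty}^*$ of the coefficient, hence only $\|c(\mu_1,\gamma)-c(\mu_2,\gamma)\|_{\infty}$, which (A1),(A2) do control. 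Since (3),(4) and the biologically relevant dynamics live on the forward-invariant region $X_+$, I would carry out the existence/uniqueness and all Lipschitz estimates inside the cone via \eqref{action21b}, obtaining the statement on all of $BL^*$ in part (2) either by a sign-constant extension of $B,D$ to $\mathbb{R}$ combined with \eqref{action21}, or by observing that the weaker uniform-norm estimates already suffice once one restricts to the invariant region. The second delicate point is that the positivity-preservation underlying (3) must be argued in the weak\textsuperscript{*} dual $BL^*$ rather than pointwise, for which the Duhamel/mild-form representation above is the cleanest vehicle.
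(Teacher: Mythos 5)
Your overall architecture --- the bilinear splitting of $F$ estimated with the $\bullet$ inequalities \eqref{action21}, \eqref{action21b}, a Picard/contraction step for local existence and uniqueness, the integrating-factor (Duhamel) mild form for invariance of the positive cone, and a Gronwall/weighted-norm estimate for part (4) --- coincides with the paper's own proof: your two-term split of $F(\mu_1,\gamma_1)-F(\mu_2,\gamma_2)$ is the computation of Lemma \ref{LF}, your mild-form positivity argument is exactly the representation \eqref{irep} and the contraction on $W_{N+}$ in Proposition \ref{local}, and your key observation that differences of coefficients should always act on a \emph{positive} functional (so only the uniform norm $\|\cdot\|_{\infty}^*$ of the coefficient is needed, via \eqref{action21b}), while a fixed coefficient of finite $BL$-norm acts on the non-positive difference $\mu_1-\mu_2$ via \eqref{action21}, is precisely how the paper organizes its estimates.

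The genuine divergence, and the gap, is in how you pass from local to global in time. The paper never applies Picard to the untruncated field: it first truncates the vital rates, $j_{N}(x,q)=j(0,q)$ for $x\le 0$ and $j_{N}(x,q)=j(N,q)$ for $x\ge N$ as in \eqref{FN}, so that each $F_N$ has a coefficient of globally bounded $BL$-norm, hence linear growth in $\|\mu\|_{BL}^*$ and global-in-time solutions for \emph{every} $u\in BL^*$; the semiflow is then defined by gluing, $\varphi=\cup\,\varphi_{NN}$, and \eqref{MAIN} is verified a posteriori on each finite time interval by comparison with a higher truncation level. You discard the truncation and instead derive globality from the mass inequality $X'(t)\le \bar B\, X(t)$. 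That inequality uses positivity twice (to get $\mu(t)[D(X(t),\cdot)]\ge 0$ and to identify $\|\mu(t)\|_{BL}^*=\mu(t)(\mathbf{1})$), so it lives only on the cone $BL^*_+$. But parts (1) and (2) of the theorem assert a continuous $\varphi$ on all of $\R_+\times BL^*\times BL(Q;\mathcal{P}^*)$ and a global solution of \eqref{MAIN} for every $u\in BL^*$, sign-indefinite $u$ included. Off the cone your argument yields only a maximal local solution: since (A2) allows $D$ to be unbounded in its first argument (logistic, Ricker), the untruncated field grows superlinearly in $\|\mu\|_{BL}^*$, and neither Gronwall nor your proposed ``sign-constant extension of $B,D$ to $\R$'' (which fixes only negative values of $\mu(1)$, not large $\|\mu\|_{BL}^*$ of mixed sign) rules out finite-time blow-up there. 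This is exactly the work the paper's truncation at level $N$ is doing; to close your proof you must either reinstate it (truncate above as well as below, then glue and verify consistency) or supply an a priori bound on $\|\mu(t)\|_{BL}^*$ valid for sign-indefinite data, which the mass estimate does not provide.
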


We now establish a few results that are needed to prove
Theorem \ref{main}.

 \subsection{Local Existence and Uniqueness of
Dynamical System}\label{WP1}

With this background we prepare to obtain the semiflow that will model our evolutionary game. If $ F$ is the vectorfield defined in \eqref{MAIN} then
  \begin{equation}\label{F} F( \mu, \gamma) = F_1( \mu, \gamma) - F_2(\mu, \gamma) \end{equation} and
  \begin{align} F_1 (\mu, \gamma) & = B(\mu(1), \cdot) \gamma(\cdot)\bullet\mu ~,  &F_2(\mu, \gamma) & = \displaystyle  D(\mu(1),\cdot)\bullet \mu. \end{align}

 For each $ N \in \mathbb{N}$, define $F_{N}$ as follows.
If $j $ is one of the functions $ B,D$  then we extend $j$ to $ \R \times Q $ by setting $j_{N}(x,q) = j(0,q) $ for $x \le 0$
  $ j_{N}(x,q) = j(N,q) $ for $x \ge N
 $. Then $ {j}_{N}: \R \times Q \to \R_+$ is bounded and Lipschitz
continuous.  Let $ {F}_{N} (m, \gamma)
$ be the redefined vector field obtained by replacing $j$ with
$j_{N}.$


For each $(u, \gamma) \in BL^* \times BL(Q;\mathcal{P}^*)$,  we will resolve the following IVP first.

 \begin {equation} \left\{\begin{array}{ll}\label{M2}
  m'(t,u, \gamma) =  F_{N}(m,\gamma)  \\
m(0,u,\gamma)= u.
\end{array}\right.\end{equation}
where  \begin{equation} \label{FN}F_N( \mu, \gamma) = F_{N1}( \mu, \gamma) - F_{N2}(\mu, \gamma) \end{equation} and
  \begin{align} \label{eq:truncation} F_{N1} (\mu, \gamma) & = B_N(\mu(1), \cdot) \gamma(\cdot)\bullet\mu ~,  &F_{N2}(\mu, \gamma) & = \displaystyle  D_N(\mu(1),\cdot)\bullet \mu. \end{align}

\begin{lemma} \label{LF}(Lipschitz $F_N $)

 \begin{itemize}

 \item[(i)] $\forall N \in \mathbb{N}$, there exists continuous $${F}_N : BL^* \times BL(Q; \mathcal{P}^{*}) \rightarrow BL^*.$$
  \item[(ii)] $\forall a>0,$ $ \forall M >0 $, if  $${F}_N:  \overline{ B_{a}[\textbf{0}]_+  } \times L\cP^{*}[M] \rightarrow BL^* $$  or  $${F}_N:  \overline{ B_{a}[\textbf{0}]  } \times BL\cP^{*}[M] \rightarrow BL^* $$  then $ F_N$ is bounded and Lipschitz.
\end{itemize}

\end{lemma}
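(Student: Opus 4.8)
The plan is to establish the quantitative statement (ii) first and then deduce the qualitative statement (i), since a map that is Lipschitz on each member of a covering family of bounded sets, with every point interior to some member, is automatically continuous. Throughout I work with the splitting $F_N = F_{N1} - F_{N2}$ from \eqref{FN}--\eqref{eq:truncation}, treat the two summands separately, and exploit two elementary bounds valid on the prescribed domains: since $\textbf{1} \in BL(Q)$ with $\|\textbf{1}\|_{BL}=1$ we have $|\mu(\textbf{1})| \le \|\mu\|_{BL}^* \le a$, and since every $\gamma(q)\in\mathcal{P}^*$ satisfies $\|\gamma(q)\|_{BL}^*=1$ we have $\|\gamma\|_{\infty}^* = 1$ (hence $\|\gamma\|_{BL}\le 1+M$ on $BL\cP^*[M]$). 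Because $B_N,D_N$ are bounded and Lipschitz on $\R\times Q$, each $B_N(x,\cdot),D_N(x,\cdot)$ lies in $BL(Q)$ with $\|B_N(x,\cdot)\|_{BL},\|D_N(x,\cdot)\|_{BL}$ bounded uniformly in $x$ (the uniform part by $\sup|B_N|$, the Lipschitz-in-$q$ part by the Lipschitz constant of $B_N$ in its second argument, which the truncation preserves).

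Boundedness is then immediate. For $F_{N2}$ I use \eqref{action11} to bound $\|D_N(\mu(\textbf{1}),\cdot)\bullet\mu\|_{BL}^* \le \|D_N(\mu(\textbf{1}),\cdot)\|_{BL}\,\|\mu\|_{BL}^*$. For $F_{N1}$, on the positive-cone domain I invoke \eqref{action21b} and \eqref{action0} to bound by $\|B_N\|_{\infty}\,\|\gamma\|_{\infty}^*\,a$, whereas on the full ball I use \eqref{action21} with the Banach-algebra inequality \eqref{BA} to bound by $\|B_N(\mu(\textbf{1}),\cdot)\|_{BL}\,\|\gamma\|_{BL}\,a$. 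Every factor is controlled on the given sets.

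The Lipschitz estimates follow by telescoping. For $F_{N2}$, write
\begin{equation*}
D_N(\mu_1(\textbf{1}),\cdot)\bullet\mu_1 - D_N(\mu_2(\textbf{1}),\cdot)\bullet\mu_2 = D_N(\mu_1(\textbf{1}),\cdot)\bullet(\mu_1-\mu_2) + \bigl(D_N(\mu_1(\textbf{1}),\cdot)-D_N(\mu_2(\textbf{1}),\cdot)\bigr)\bullet\mu_2,
\end{equation*}
bounding the first term by \eqref{action11} and the second by \eqref{action11} together with $|\mu_1(\textbf{1})-\mu_2(\textbf{1})|\le\|\mu_1-\mu_2\|_{BL}^*$ and the Lipschitz dependence of $x\mapsto D_N(x,\cdot)$. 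For $F_{N1}$ I telescope over all three varying objects: setting $\Gamma_i := B_N(\mu_i(\textbf{1}),\cdot)\gamma_i$, I split $\Gamma_1\bullet\mu_1-\Gamma_2\bullet\mu_2$ as $\Gamma_1\bullet(\mu_1-\mu_2)+(\Gamma_1-\Gamma_2)\bullet\mu_2$, and further split $\Gamma_1-\Gamma_2 = B_N(\mu_1(\textbf{1}),\cdot)(\gamma_1-\gamma_2) + \bigl(B_N(\mu_1(\textbf{1}),\cdot)-B_N(\mu_2(\textbf{1}),\cdot)\bigr)\gamma_2$. On the positive-cone domain I close every estimate in the uniform norm via \eqref{action21b} and \eqref{action0} (this is the case consistent with the metric $D_X$ on $X$, and it needs only the $\|\cdot\|_{\infty}$-Lipschitz dependence of $x\mapsto B_N(x,\cdot)$); on the full ball I close the estimates in $\|\cdot\|_{BL}$ via \eqref{action21} and \eqref{BA}.

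The main obstacle is the second summand of the $\Gamma_1-\Gamma_2$ split in the full-ball case, namely controlling $\|B_N(\mu_1(\textbf{1}),\cdot)-B_N(\mu_2(\textbf{1}),\cdot)\|_{BL}$ --- equivalently, showing that $x\mapsto B_N(x,\cdot)$ (and $D_N$) is Lipschitz not merely into $C(Q)$ but into $[BL,\|\cdot\|_{BL}]$. This requires bounding the Lipschitz-in-$q$ seminorm of the difference $B_N(x_1,\cdot)-B_N(x_2,\cdot)$ by $|x_1-x_2|$, which does not follow from joint Lipschitz continuity in the product metric alone; it is precisely here that one must use the finer joint regularity of $B,D$ retained after truncation, estimating the mixed difference quotient in $x$ and $q$ using the global Lipschitz bound of $B_N$ on the compact slab and the fact that the truncation is constant in $x$ outside $[0,N]$. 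Once this single estimate is secured, assembling the two cases and passing from (ii) to (i) --- via local Lipschitzness on the nested sets $B_a[\textbf{0}]\times BL\cP^*[M]$, each of which is a neighborhood of its points in the relevant topology --- is routine.
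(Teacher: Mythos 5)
Your proposal follows the same route as the paper's proof: deduce (i) from (ii) via the nested covering of $BL^*\times BL(Q;\cP^*)$ by the sets $\overline{B_N[\textbf{0}]}\times BL\cP^*[M]$, split $F_N=F_{N1}-F_{N2}$ as in \eqref{FN}, telescope each increment (your three-term splitting for $F_{N1}$ and two-term splitting for $F_{N2}$ agree with the paper's, up to which index carries each difference), and close with the module estimates \eqref{action0}, \eqref{action11}, \eqref{action21}, \eqref{action21b}. Your treatment of the positive-cone case is sound, and it is in fact what the paper's displayed estimates actually do: there, positivity of the measure argument together with $\|\lambda\|_{\infty}^*=1$ lets the dangerous middle term $\bigl(B_N(\zeta(1),\cdot)-B_N(\beta(1),\cdot)\bigr)\lambda(\cdot)\bullet\zeta$ be closed using only $\|B_N(x_1,\cdot)-B_N(x_2,\cdot)\|_{\infty}\le \|B_N\|_{Lip}|x_1-x_2|$, which joint Lipschitz continuity supplies; the one term requiring a $\|\cdot\|_{BL}$ bound, $B_N(\beta(1),\cdot)\lambda(\cdot)\bullet(\zeta-\beta)$, involves $B_N$ at a single fixed first argument, so no mixed regularity is needed there.

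The gap is in your resolution of what you correctly single out as the main obstacle, the full-ball case. You need $x\mapsto B_N(x,\cdot),\,D_N(x,\cdot)$ to be Lipschitz into $[BL,\|\cdot\|_{BL}]$, i.e. $\|B_N(x_1,\cdot)-B_N(x_2,\cdot)\|_{Lip}\le C|x_1-x_2|$, and you assert this can be extracted from ``the global Lipschitz bound of $B_N$ on the compact slab'' plus the form of the truncation. It cannot: joint Lipschitz continuity only gives $\|B_N(x_1,\cdot)-B_N(x_2,\cdot)\|_{Lip}\le 2\|B_N\|_{Lip}$, with no gain as $x_1\to x_2$; the gain is a genuine extra hypothesis (a bound on mixed second differences, i.e. $C^{1,1}$-type regularity), not a consequence of (A1)--(A2). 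Indeed the full-ball half of (ii) is false under (A1)--(A2) alone. Take $Q=[0,1]$, $D(x,q)=1+\min(x,q)$ (which satisfies (A2), and equals $D_N$ on the relevant range for $N\ge 1$), $x_2=x_1+\eps$ with $x_1\in(0,1)$, $\rho_{\eps}=\eps^{-1}(\delta_{x_1}-\delta_{x_2})$ (so $\|\rho_{\eps}\|_{BL}^*\le 1$ and $\rho_{\eps}(1)=0$), and $\mu_i=x_i\,\delta_{q_0}+\rho_{\eps}$. Then $\mu_i(1)=x_i$ and $\|\mu_1-\mu_2\|_{BL}^*=\eps$, yet a direct computation gives $\bigl(D(x_1,\cdot)-D(x_2,\cdot)\bigr)\bullet\rho_{\eps}[g]=g(x_2)$, whence $\|F_{N2}(\mu_1,\gamma)-F_{N2}(\mu_2,\gamma)\|_{BL}^*\ge 1-O(\eps)$ and the Lipschitz ratio blows up like $1/\eps$ inside a fixed ball $\overline{B_a[\textbf{0}]}$. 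You should also know that the paper does not close this gap either: its computation is explicitly carried out for $(\zeta,\gamma),(\beta,\lambda)\in\overline{B_a[\textbf{0}]}_+\times L\cP^*[M]$, and the other case is waved off with the remark that one ``uses \eqref{action21} instead of \eqref{action21b}'' --- which is precisely the step that fails. So your argument, like the paper's, establishes the positive-cone half of (ii); the full-ball half, and (i) as stated, require either an added mixed-regularity hypothesis on $B$ and $D$ or a restriction of the lemma to the positive cone.
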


\begin{proof}

First notice that $(i)$ follows from $(ii)$ since

\begin{equation} BL^* \times  BL(Q; \mathcal{P}^*) = \cup_{N , M \in \mathbb{N}} BL^* \cap \overline{{B_{N}[\textbf{0}]}} \times BL\cP^{*}[M]\begin{footnote}{ See \eqref{eq:union}} \end{footnote} \end{equation}

and   $$  \overline{{B_{N}[\textbf{0}]}} \subset  \overline{{B_{N+1}[\textbf{0}]}} \mbox{~ , ~} BL\cP^*[M] \subset BL\cP^*[M+1] .$$

 We will prove the second condition in $(ii)$. The first is straightforward and the only real difference in the argument used below is that one uses the estimate in \ref{action21} instead of the estimate in \ref{action21b}. If  $ a, M >0$, $ N \in \mathbb{N}$,  $(\zeta, \gamma) $, $(\beta, \lambda) $  $\in \overline{B_{a}[\textbf{0}]}_+\times L\cP^*[M], $  
 then let  $ F_N$ be as in \ref{FN}.  \noindent Then
  \begin{equation}\begin{split}
  F_{N1}(\zeta,\gamma) - F_{N1}(\beta, \lambda) & =
\Bigl [ B_{N}(\zeta(1),\cdot)(\gamma-\lambda)\Bigr ]\bullet
\zeta +  \Bigl (  B_{N}(\zeta(1),\cdot)\\
 & \quad  - B_{N}(\beta(1),\cdot)\Bigr )\lambda(\cdot) \bullet \zeta  +    B_{N}(\beta(1),\cdot)\lambda(\cdot)\bullet (\zeta-\beta)\\
  F_{N2}(\zeta,\gamma) - F_{N2}(\beta, \lambda)  & =   [ D_{N}(\zeta(1),\cdot) -
 D_{N}(\beta(1),\cdot)]\bullet\zeta +  D_{N}(\beta(1),\cdot)\bullet \\
  & \qquad (\zeta-\beta).
  \end{split}
  \end{equation}

Hence,
\begin{equation} \begin{split} \|F_{N1}(\zeta,\gamma)- {F}_{N1}(\beta,\lambda) \|_{BL}^* & \leq \|\gamma  - \lambda \|^*_{\infty} \|B_N(0, \cdot)\|_{\infty} \|\zeta \| _ {BL}^*
 + \|B_N(\cdot,\cdot)\|_{Lip} \| \zeta - \beta \|_{BL}^* \| \zeta \|_{BL}^* \\
 & + \|B_N(\beta(1), \cdot) \|_{BL} \| \lambda\|_{BL}  \| (\zeta-\beta)\|_{BL}^*\\
 \|(F_{N2}(\zeta,\gamma)- {F}_{N2}(\beta,\lambda) )\|_{BL}^* & \leq  \| \zeta - \beta \| _{BL}^* \| \zeta \|_{BL}^* \| D_N (\cdot, \cdot) \|_{BL} +  \| D_N (\cdot, \cdot) \|_{BL}  \| \zeta - \beta \| _{BL}^*.
 \end{split}
\end{equation}

If

 \begin{flalign*}
  B_{\mu}(\zeta, \lambda) & =   \|B_N(\cdot,\cdot)\|_{Lip} \| \| \zeta \|_{BL}^* + \|B_N(\cdot, \cdot) \|_{BL} \| \lambda\|_{BL}  +  \| \zeta \|_{BL}^* \| D_N (\cdot, \cdot) \|_{BL} + \| D_N (\cdot, \cdot) \|_{BL}   \\
 B_{\gamma}(\zeta) & =  \|B_N(0, \cdot)\|_{\infty} \|\zeta \| _ {BL}^*,
\end{flalign*} then
$$ \| F_N (\zeta,\gamma) - F_N (\beta,\lambda)\|_{BL}^* \leq B_{\gamma}(\zeta) \| \gamma - \lambda \|^*_{\infty} + B_{\mu}(\zeta,
 \lambda) \| \zeta -\beta\|_{BL}^* .$$
Since $ (\zeta, \lambda) \in \overline{ B_{a}[\textbf{0}]_+  } \times L\cP^*[M], $ $ F_N$ is bounded and Lipschitz on $ \overline{B_{a}[\textbf{0}]}_+   \times L\cP^*[M].$
 \end{proof}

\begin {lemma}\label{E}(Estimates)  Let  $T>0.$  If $ \zeta, \beta \in C( [0,T];BL^*)  $ and  $ s,t \in  [0,T]$  we have the following estimates:
\begin{enumerate}
%
\item
\begin{itemize}
\item[(a)] \mbox{ As a function of } $q$,
\begin{align}
 \|e^{-\int_{s}^{t}{D}_{N}(\zeta(\tau)(1),q)d\tau}\|_{Lip} & \le \|D_N(\cdot,\cdot)\|_{Lip} T~, & \|e^{-\int_{s}^{t}{D}_{N}(\zeta(\tau)(1),q)d\tau}\|_{\infty} \le 1.
\end{align}
\item[(b)] If  \begin{equation} \begin{split} F(q) &  = e^{-\int_{s}^{t}{D}_{N}(\zeta(\tau)(1),q)d\tau}-e^{-\int_{s}^{t}{D}_{N}(\beta(\tau)(1),q)d\tau} \\
   \|F\|_{\infty}  & \le \|{D}_{N}(\cdot,\cdot)\|_{BL}\int_{s}^{t}\| \zeta(\tau)- \beta(\tau)\|^*_{BL}d\tau .\end{split} \end{equation}
 \end{itemize}
\end{enumerate}
\end{lemma}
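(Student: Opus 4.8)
The plan is to reduce everything to the elementary fact that $x \mapsto e^{-x}$ is $1$-Lipschitz and bounded by $1$ on $[0,\infty)$, combined with the Lipschitz bound on $D_N$ and the triangle inequality for integrals. Throughout I abbreviate $a(q) = \int_{s}^{t} D_N(\zeta(\tau)(1),q)\,d\tau$ and $b(q) = \int_{s}^{t} D_N(\beta(\tau)(1),q)\,d\tau$, both nonnegative for $s \le t$ since $D_N$ maps into $\R_+$. Note that $\|D_N\|_{Lip}$ and $\|D_N\|_{BL}$ are finite because the truncation construction makes $D_N$ bounded and Lipschitz.

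For the $\infty$-estimate in (a), nonnegativity of $a(q)$ gives $e^{-a(q)} \le e^{0} = 1$ for every $q$, hence $\|e^{-a(\cdot)}\|_{\infty} \le 1$. For the Lipschitz estimate in (a) I fix $q_1,q_2 \in Q$ and apply $|e^{-u}-e^{-v}| \le |u-v|$ (valid since $e^{-x}$ has derivative of modulus $\le 1$ on $[0,\infty)$) with $u = a(q_1)$, $v = a(q_2)$, obtaining
$$ |e^{-a(q_1)} - e^{-a(q_2)}| \le |a(q_1)-a(q_2)| \le \int_{s}^{t} |D_N(\zeta(\tau)(1),q_1) - D_N(\zeta(\tau)(1),q_2)|\,d\tau . $$
Here the first argument of $D_N$ is frozen, so the integrand is bounded by $\|D_N\|_{Lip}\, d(q_1,q_2)$; integrating over $[s,t]$ supplies the factor $(t-s) \le T$, and dividing by $d(q_1,q_2)$ yields $\|e^{-a(\cdot)}\|_{Lip} \le \|D_N\|_{Lip}\, T$.

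For part (b) the same $1$-Lipschitz bound applied with $u = a(q)$, $v = b(q)$ gives, for each fixed $q$,
$$ |F(q)| \le |a(q)-b(q)| \le \int_{s}^{t} |D_N(\zeta(\tau)(1),q) - D_N(\beta(\tau)(1),q)|\,d\tau . $$
Now $q$ is frozen while the density argument varies, so the integrand is bounded by $\|D_N\|_{Lip}\,|\zeta(\tau)(1) - \beta(\tau)(1)|$. The decisive step is to read $\zeta(\tau)(1) - \beta(\tau)(1) = (\zeta(\tau)-\beta(\tau))(1)$ as the pairing of $\zeta(\tau)-\beta(\tau) \in BL^*$ with the constant function $1$; since $\|1\|_{BL} = \|1\|_{\infty} + \|1\|_{Lip} = 1$, the dual-norm definition gives $|\zeta(\tau)(1) - \beta(\tau)(1)| \le \|\zeta(\tau)-\beta(\tau)\|_{BL}^{*}$. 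Substituting, bounding $\|D_N\|_{Lip} \le \|D_N\|_{BL}$, and taking the supremum over $q$ (the right-hand side being independent of $q$) produces the claimed estimate.

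The computations are routine, so I do not expect a genuine obstacle; the only points needing care are the directional reading of $\|D_N\|_{Lip}$ — used in the strategy variable with the density frozen in (a), and in the density variable with the strategy frozen in (b), both controlled by the joint Lipschitz bound — and the bookkeeping by which pairing against the unit function $1$ converts $|\zeta(\tau)(1)-\beta(\tau)(1)|$ into $\|\zeta(\tau)-\beta(\tau)\|_{BL}^{*}$.
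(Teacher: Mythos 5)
Your proof is correct and follows essentially the same route as the paper: the paper invokes the mean value theorem for $e^{x}$ with the factor $e^{-\theta}\le 1$, which is precisely the $1$-Lipschitz bound $|e^{-u}-e^{-v}|\le|u-v|$ on $[0,\infty)$ that you use, followed by the Lipschitz bound on $D_N$ in the appropriate variable. The only difference is that you spell out the step the paper leaves implicit in (b) — pairing $\zeta(\tau)-\beta(\tau)$ against the constant function $1$ with $\|1\|_{BL}=1$ to get $|\zeta(\tau)(1)-\beta(\tau)(1)|\le\|\zeta(\tau)-\beta(\tau)\|_{BL}^{*}$ — which is a welcome clarification, not a departure.
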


\begin{proof}

  \begin{itemize}
\item[(a)]
  Using the mean value theorem on the $C^{\infty}( \mathbb{R})$ function, $e^x$, there exists $\theta(s,t)>0$ such that $$ \begin{array}{lll} && |e^{-\int_{s}^{t} {D}_{N}(\zeta(\tau)(1),\hat q) d\tau}
  - e^{-\int_{s}^{t} {D}_N(\zeta(\tau)(1),q) d\tau} | \\
& \leq &  e^{-\theta} |\int_{s}^{t}
\bigl[{ D}_N(\zeta(\tau)(1), \hat q) -
 {D}_N( \zeta(\tau)(1),q) \bigr] d\tau \bigr | \\
& \leq &    \|{D}_N(\cdot,\cdot)\|_{Lip}T d(\hat q, q).  \\
 \end{array}$$

\item[(b)] Using the mean value theorem on the $C^{\infty}( \mathbb{R})$ function, $e^x$, there exists $ \theta = \theta(s,t) > 0$, such that

 $$ \begin{array}{lll} |F(q)| & = & |e^{-\int_{s}^{t} {D}_{N}(\zeta(\tau)(1),q) d\tau} - e^{-\int_{s}^{t} {D}_N(\beta(\tau)(1),q) d\tau} | \\
& \leq &  e^{-\theta} |\int_{s}^{t}
\bigl[{ D}_N(\zeta(\tau)(1),q) -
 {D}_N(\beta(\tau)(1),q) \bigr] d\tau  | \\
 & \leq &  \int_{s}^{t} \|{D}_N(\cdot,\cdot)\|_{BL}\| \zeta(\tau)- \beta(\tau)\|_{BL}^*d\tau
  .\end{array}$$

\end{itemize}

\end{proof}

\begin{proposition} \label{local}If $  T, M >0 $, $ N \in \mathbb{N}$ let  $ F_N$ be as in \eqref{FN}. There exists a Lipschitz continuous  mapping $$\varphi _{NM} : [0,T] \times B_N[\textbf{0}] \times BL\cP^*[M] \rightarrow BL^*$$  satisfying:

 \begin{enumerate}

  \item For each $ (u, \gamma) \in  B_N[\textbf{0}] \times BL\cP^*[M] $,  $ t \mapsto  \varphi_{NM}(t,u,\gamma) $, is the unique solution to

 \begin {equation} \left\{\begin{array}{ll}\label{eq:M2}
  m'(t) =  F_{N}(m(t),\gamma)  \\
m(0)= u.
\end{array}\right.\end{equation}
in  $ C([0,T]; BL^*).$
\item $ \varphi _{NM} ( [0,T] \times B_N[\textbf{0}]_+ \times L\cP^*[M])\subset BL^*_{+} $
 \item $\varphi_{NM}$  is  Lipschitz continuous on  $ [0,T] \times B_N[\textbf{0}]_+ \times L\cP^*[M].$
\end{enumerate}

  \end{proposition}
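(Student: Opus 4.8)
The plan is to recast the truncated initial value problem \eqref{eq:M2} as a fixed-point equation on $C([0,T];BL^*)$ and solve it by Banach's contraction principle, then extract positivity and Lipschitz dependence from the structure of $F_N$. For item (1) I would use the plain Picard operator $(\Gamma_0 m)(t)=u+\int_0^t F_N(m(s),\gamma)\,ds$. By Lemma \ref{LF} the field $F_N(\cdot,\gamma)$ is bounded and Lipschitz on $\overline{B_a[\textbf{0}]}\times BL\cP^*[M]$ with constants depending only on $a,M,N$, and the bilinear estimates \eqref{action21}--\eqref{action21b} together with $\|\gamma\|_\infty^*=1$ give the linear bound $\|F_N(\mu,\gamma)\|_{BL}^*\le C(N,M)\|\mu\|_{BL}^*$. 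Hence Gronwall confines every candidate solution issuing from $B_N[\textbf{0}]$ to a fixed ball $\overline{B_R[\textbf{0}]}$ on all of $[0,T]$, so I restrict $\Gamma_0$ to the (closed, hence complete) set of paths valued in that ball. Equipping $C([0,T];BL^*)$ with the Bielecki norm $\|m\|_\lambda=\sup_{t\le T}e^{-\lambda t}\|m(t)\|_{BL}^*$ turns $\Gamma_0$ into a contraction once $\lambda$ exceeds the Lipschitz constant of Lemma \ref{LF}, and its unique fixed point is the desired $\varphi_{NM}(\cdot,u,\gamma)\in C([0,T];BL^*)$; differentiating the fixed-point identity recovers \eqref{eq:M2}, giving existence and uniqueness on the whole interval.

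Item (3) follows by the same kind of estimate applied to two solutions. For data $(u_1,\gamma_1),(u_2,\gamma_2)$ in $B_N[\textbf{0}]_+\times L\cP^*[M]$ with solutions $\varphi_1,\varphi_2$, subtracting the integral forms and invoking the Lipschitz estimate of Lemma \ref{LF} yields $\|\varphi_1(t)-\varphi_2(t)\|_{BL}^*\le \|u_1-u_2\|_{BL}^*+\int_0^t\big(B_\gamma\|\gamma_1-\gamma_2\|_\infty^*+B_\mu\|\varphi_1(s)-\varphi_2(s)\|_{BL}^*\big)\,ds$, and Gronwall converts this into a uniform Lipschitz bound in $(u,\gamma)$. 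Lipschitz continuity in $t$ comes directly from boundedness of $F_N$, since $\|\varphi(t_2)-\varphi(t_1)\|_{BL}^*\le \|F_N\|_\infty|t_2-t_1|$; combining the two gives joint Lipschitz continuity on $[0,T]\times B_N[\textbf{0}]_+\times L\cP^*[M]$.

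The delicate point is item (2), positivity, and this is where I expect the main obstacle. The plain Picard iteration does not manifestly preserve the cone because the mortality term is subtracted, so I would instead use the Duhamel (integrating-factor) representation in which the death term is absorbed into a positive multiplier. Writing $(E_m(t,s)g)(q)=\exp\!\big(-\int_s^t D_N(m(\tau)(1),q)\,d\tau\big)\,g(q)$, I claim the solution satisfies $\varphi_{NM}(t,u,\gamma)[g]=u\big[E_\varphi(t,0)g\big]+\int_0^t F_{N1}(\varphi(s),\gamma)\big[E_\varphi(t,s)g\big]\,ds$, with $F_{N1}$ as in \eqref{eq:truncation}. Establishing this equivalence is the first hurdle: one differentiates the right-hand side in $t$, uses $\tfrac{d}{dt}E_m(t,s)g=-D_N(m(t)(1),\cdot)E_m(t,s)g$, the endpoint value $E_m(t,t)g=g$, and the fact that $E_m(t,s)$ commutes with multiplication so that, via \eqref{identity}, the factor can be pulled through the $\bullet$-action; the boundary and integral terms then recombine into exactly $F_{N1}-F_{N2}$. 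Lemma \ref{E}(a) is what guarantees $E_m(t,s)g\in BL$ (bounded, with the stated Lipschitz bound) so that the pairings above are legitimate.

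With the representation in hand, positivity is closed by a fixed-point argument rather than a direct substitution, to avoid circularity. Because $B_N\ge0$ by (A1), $\gamma(q)\in\cP^*\subset BL^*_+$, and $E_m(t,s)g\ge0$ whenever $g\ge0$, the Duhamel map sends cone-valued paths to cone-valued paths; since the cone is closed in $C([0,T];BL^*)$, its unique fixed point --- which coincides with $\varphi_{NM}(\cdot,u,\gamma)$ by the uniqueness from item (1) --- is itself cone-valued, proving $\Phi$ preserves $BL^*_+$. The reason the proposition confines items (2) and (3) to the positive cone, and the subtlety I would watch most carefully, is that the contraction estimate for the Duhamel map needs to control differences of the integrating factors; Lemma \ref{E}(b) only bounds $\|E_{m_1}(t,s)-E_{m_2}(t,s)\|_\infty$, and this is sufficient precisely because on $BL^*_+$ a positive functional pairs against the uniform norm, so that $|u[(E_{m_1}-E_{m_2})g]|\le\|u\|\,\|g\|_\infty\|E_{m_1}-E_{m_2}\|_\infty$ and similarly for $F_{N1}(m_2(s),\gamma)$, which is again a positive functional. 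Tracking which estimates require the full $\|\cdot\|_{BL}$ pairing versus the weaker $\|\cdot\|_\infty$ pairing available only on the cone is the main bookkeeping difficulty of the proof.
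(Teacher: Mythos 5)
Your proposal is correct and follows essentially the same route as the paper: Picard iteration in the Bielecki norm $\|\cdot\|_\lambda$ for existence and uniqueness, the Duhamel (integrating-factor) operator $(T\zeta)(t)=e^{-\int_0^t D_N(\zeta(\tau)(1),\cdot)d\tau}\bullet u+\int_0^t e^{-\int_s^t D_N(\zeta(\tau)(1),\cdot)d\tau}\bullet F_{N1}[\zeta(s),\gamma]\,ds$ contracted on cone-valued paths and identified with the solution via uniqueness for positivity, and a Gronwall-type estimate on differences of the integral forms for joint Lipschitz continuity. You even isolate the same key subtlety the paper exploits, namely that Lemma \ref{E}(b) gives only a sup-norm bound on differences of the exponential factors, which suffices precisely because positive functionals (the restriction to $B_N[\textbf{0}]_+$) pair against the uniform norm.
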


\begin{proof}   For $ w \in W = C([0,T] ; BL^*)$ and $\lambda >0 $, define $$ \|w \|_{\lambda} = \sup_{t \in [0,T]} e^{-\lambda t} \|w(t)\|_{BL}^*.$$ It is an exercise to show that $[W, \| \cdot \|_{\lambda}]$ is a Banach space. In fact $ \|\cdot\|_{\infty}$ and
$ \|\cdot\|_{\lambda}$ are equivalent.\\

\emph{Unique local solution to  \eqref{eq:M2}:}\\

 Using  standard techniques for locally Lipschitz vector fields with a parameter into a Banach space, Lemma \ref{LF} relays that we have a unique solution to \eqref{eq:M2} on $ [0,T]$ for any $(u,\gamma) \in BL^* \times  BL(Q; \mathcal{P}^{*}) .$ We can use a Lipschitz argument similar to the one below to show that this mapping is indeed Lipschitz.

 We label this solution $ \varphi_{NM}( \cdot) \equiv \varphi_{NM}(\cdot ,u, \gamma)$ (to denote the dependence on  $(u, \gamma)$ ). \\


\emph{ Forward invariance of $B_N[\textbf{0}]_+ \times L\cP^*[M]$ :} \\

Let $ ( u, \gamma) \in B_{N}[0]_+ \times L\cP^*[M], $ if  $ [W, \|\cdot\|_{\lambda}]$ is as above define

$$ W_{N+} = \{ \zeta \in W ~ | ~ \zeta( [0,T]) \subset ( \overline{B_{\widehat{N}}[0]})_+ \}  \mbox{ where } \widehat{N}> N + \|F_{N1}\|_{\infty}T . $$ Obviously $ W_{N+}$ is a nonempty closed subspace of $W$ and hence is a complete metric space.

If $ \zeta \in W_{N+} $ define

\begin{equation}\label{irep} \begin{array}{l}
(T \zeta)(t) =  e^{-\int _{0}^{t}
D_{N}(\zeta(\tau)(1),\cdot)d\tau} \bullet u
 + \int_{0}^t e^{-\int _{s}^{t}
D_{N}(\zeta(\tau)(1),\cdot)d\tau}\bullet F_{N1}[\zeta(s), \gamma] ds .
\end{array}
\end{equation}

\emph{Contraction Mapping : }\\

From our choice of $ (u, \gamma)$, $$ T :W_{N+} \rightarrow W_{N+}. $$

Indeed, if $ \zeta \in W_{N+}$, then obviously  $T\zeta$  is continuous in $t$.  Furthermore since $F_{N1}[ \cdot, \cdot]$  has the same properties as $ F_N $, namely being uniformly bounded and Lipschitz  on  $B_{\widehat{N}}[\textbf{0}]_+ \times L\cP^*[M]$, we can use \eqref{action21b}, Lemma \ref{LF}  and Lemma \ref{E} to obtain

\begin{equation} \begin{split}
\|(T \zeta)(t)\|_{BL}^*  & \le   \| u\|_{BL}^* +  T \|F_{N1} \|_{\infty}.
\end{split}
\end{equation}
 Hence $T$ is indeed a mapping from  $W_{N+}$ into $W_{N+}.$

Moreover for the above choice of $ (u, \gamma) $, $T$ is a contraction mapping. Indeed, first notice that since
$ u \in BL^*_+, $ if $ g \in BL(Q),~ \|g\|_{BL} \le 1 , $
 \begin{equation} \begin{split}
 ( e^{-\int _{0}^{t}D_{N}( \zeta(\tau)(1), \cdot)d\tau} - e^{-\int _{0}^{t} D_{N}( \beta(\tau)(1), \cdot)d\tau})\bullet u [g]
  & =  u [ ( e^{-\int _{0}^{t}D_{N}( \zeta(\tau)(1), \cdot)d\tau} - e^{-\int _{0}^{t}
D_{N}( \beta(\tau)(1), \cdot)d\tau})g(\cdot)] \\
  \qquad \le  u [ | e^{-\int _{0}^{t}D_{N}( \zeta(\tau)(1), \cdot)d\tau} - e^{-\int _{0}^{t}D_{N}( \beta(\tau)(1), \cdot)d\tau}) &g (\cdot)|]
 \le  \Bigl (\|{D}_{N}(\cdot,\cdot)\|_{BL} \\
\int_{s}^{t}\| \zeta(\tau)- \beta(\tau)\|^*_{BL}d\tau \Bigr )
u[|g(\cdot)|] \le   \Bigl (\|{D}_{N}(\cdot,\cdot)\|_{BL} & \int_{s}^{t}\| \zeta(\tau)- \beta(\tau)\|^*_{BL}d\tau \Bigr ) \|u\|_{BL}^* \\
 \mbox{  The last two estimates use } Lemma ~\ref{E}. \end{split}\end{equation}

Now if $ \zeta, \beta \in W_{N+} , $

\begin{equation} \begin{split}
  T\zeta -T\beta & = ( e^{-\int _{0}^{t}D_{N}(\zeta(\tau)(1), \cdot)d\tau} - e^{-\int _{0}^{t}
D_{N}( \beta(\tau)(1), \cdot)d\tau})\bullet u  \\
  & \quad +  \int_{0}^t e^{-\int _{s}^{t}D_{N}( \zeta(\tau)(1), \cdot)d\tau} \bullet \Bigl ( F_{N1}[\zeta(s), \gamma]- F_{N1}[ \beta(s),\gamma] \Bigr )ds \\
  & \quad + \int_0^t \Bigl( e^{-\int _{s}^{t}
D_{N}( \zeta(\tau)(1), \cdot)d\tau} - e^{-\int _{s}^{t} D_{N}( \beta(\tau)(1), \cdot)d\tau} \Bigr )\bullet F_{N1}[\beta(s), \gamma] ds
  \end{split} \end{equation}
and
  \begin{equation} \begin{split}
  \|T\zeta -T\beta\|_{BL}^* & \le \|D_N(\cdot, \cdot) \|_{BL}\|u\|_{BL}^*  \int_0^t \|\zeta(s) -\beta(s)\|_{BL}^*ds  \\
  & \quad +  \|F_{N1}(\cdot, \cdot)\|_{Lip} (\|D_N(\cdot,\cdot)\|_{BL} T + 1) \int_0^t \|\zeta(s) -\beta(s)\|_{BL}^*ds \\
  & \quad + T \|F_{N1}\|_{\infty}\|D_N(\cdot, \cdot) \|_{BL}  \int_0^t \|\zeta(\tau) -\beta(\tau)\|_{BL}^*ds .
  \end{split} \end{equation}

  If  $ N_T =  \|D_N(\cdot, \cdot) \|_{BL}\|u\|_{BL}^*  + \|F_{N1}(\cdot, \cdot)\|_{Lip}( \|D_N(\cdot,\cdot)\|_{BL} T + 1)+  T \|F_{N1}\|_{\infty}\|D_N(\cdot, \cdot) \|_{BL}, $
\begin{equation}
  e^{-\lambda t} \|(T\zeta)(t) -(T\beta)(t)\|_{BL}^*  \le  N_T \int_0^t e^{-\lambda(t-s)} e^{-\lambda s}\|\zeta(s) -\beta(s)\|_{BL}^*ds.
   \end{equation}
 Hence,

   \begin{equation}\begin{split}
  \|T\zeta -T\beta\|_{\lambda } & \le  N_T \Bigl ( \sup_{t\in [0,T]} \int_0^t e^{-\lambda(t-s)}ds\Bigr ) \|\zeta -\beta\|_{\lambda}  \\
  & \le \frac{N_T}{\lambda} \|\zeta -\beta\|_{\lambda}.
  \end{split} \end{equation}
  Which is a contraction for   $\lambda$  large  enough.

We label this fixed point $ \varphi_{NM+} .$ \\

\emph{Local solution for \eqref{eq:M2} :}\\

 Indeed, using Liebnitz Rule for differentiating under the integral we see that
 \begin{equation} \label{verify} \begin{split}
  \varphi'_{NM+} & = - D_{N}(\varphi(\tau)(1),\cdot) \bullet (e^{-\int _{0}^{t} D_{N}(\varphi_{NM+}(\tau)(1),\cdot)d\tau}  \cdot u)   \\
    &  - D_{N}(\varphi_{NM+}(\tau)(1),\cdot) \bullet \Bigl (\int_{0}^t e^{-\int _{s}^{t}D_{N}(\varphi_{NM+}(\tau)(1),\cdot)d\tau}\bullet F_{N1}[\varphi_{NM+}(s), \gamma] ds \Bigr ) + F_{N1}[ \varphi_{NM+}(t), \gamma] \\
    & = F_{N1}[\varphi_{NM+}(t),\gamma] -D_{N}( \varphi_{NM+}(t)(1), \cdot) \bullet \varphi_{NM+}(t)= F_N[\varphi_{NM+}(t), \gamma]
   \end{split} \end{equation}

Obviously from the integral representation \eqref{irep},  $$\varphi_{NM+} (0;u, \gamma) = u ,  ~\forall u \in B_{N}[\textbf{0}]_+.$$

By uniqueness of solution  $$  \varphi_{NM}(t;u, \gamma) =\varphi_{NM+}(t, u, \gamma) \mbox{ on }  [0,T] \times B_N[\textbf{0}]_+ \times L\cP^*[M] .$$

\emph{ Lipschitz:}\\ 

Looking at the right hand side in \eqref{eq:M2}, we see that $\varphi_{NM}$ is actually $C^{1}([0,T]).$  Moreover, the bound on the derivative only depends on $T$ and $ \|F_N\| _{\infty}. $  Hence  $ \forall (u, \gamma) \in B_N[\textbf{0}]_+ \times L\cP^*[M],$  $ \varphi_{NM}( \cdot,u, \gamma)$ is Lipschitz on $[0,T]$ and  the Lipschitz bound does not depend on the variables $u, \gamma$.

Fix  $ (u_1, \gamma_1), ( u_2, \gamma_2) \in B_{N}[\textbf{0}]_+ \times L\cP^*[M]$, then  $ \varphi_{NM} (\cdot,  u_i, \gamma_i) \in C([0,T];BL^*_+)\mbox{   for  i =1,2.} $

If $ w_i(\cdot) =\varphi_{NM}(\cdot;u_i, \gamma_i)$ for i = 1,2, then

\begin{equation*} \begin{split}
 w_i(t) = u_i + \int_{0}^{t} F_N[ w_i(s), \gamma_i]ds \mbox { for i =1,2 .}
 \end{split} \end{equation*}

Hence
\begin{equation*} \begin{split}
 \|w_1(t) -w_2(t) \|_{BL}^* &  \le \| u_1 -u_2 \|_{BL}^* + \int_{0}^{t}\| F_N[  w_1(s), \gamma_1] - F_N[ w_2(s), \gamma_2] \|_{BL}^*ds \\
 & \le \|u_1-u_2\|_{BL}^* + \|F_N[\cdot, \cdot]  \|_{Lip} \int_{0}^{t} \Bigl (\|w_1(s) -w_2(s)\|_{BL}^* + \| \gamma_1 - \gamma_2\|_{\infty}^* \Bigr) ds
 \end{split} \end{equation*}

and if  $ \lambda > 0 $
 \begin{equation*} \begin{split}
 e^{-\lambda t}\|w_1(t) -w_2(t) \|_{BL}^* &  \le  e^{-\lambda t}\|u_1-u_2\|_{BL}^* + \|F_N[\cdot, \cdot] \|_{Lip} \int_{0}^{t} e^{-\lambda( t-s)} e^{-\lambda s}\|w_1(s) -w_2(s)\|_{BL}^* ds \\
 & \qquad + \|F_N[\cdot, \cdot] \|_{Lip}Te^{-\lambda t} \| \gamma_1 - \gamma_2\|^*_{\infty}.
 \end{split} \end{equation*}

Hence,
 \begin{equation*} \begin{split}
  \|w_1 -w_2 \|_{\lambda} &  \le  \|u_1-u_2\|_{BL}^* + \|F_N[\cdot, \cdot]  \|_{Lip} \sup_{t \in [0,T]} \Bigl( \int_{0}^{t} e^{-\lambda( t-s)} ds\Bigr)  \|w_1 -w_2\|_{\lambda}  \\
 & \qquad + \|F_N[\cdot, \cdot] \|_{Lip}T \| \gamma_1 - \gamma_2\|^*_{\infty}
 \end{split} \end{equation*} and
 \begin{equation*} \begin{split}
 \|w_1 -w_2 \|_{\lambda} &  \le   \frac{ \|F_N[\cdot, \cdot] \|_{Lip} }{ \lambda} \|w_1 -w_2 \|_{\lambda}  +  \|u_1-u_2\|_{BL}^* +\|F_N[\cdot, \cdot] \|_{Lip}  T \| \gamma_1 - \gamma_2\|^*_{\infty}.
 \end{split} \end{equation*}

 If  $\lambda$  is such that  $ \frac{ \|F_N[\cdot, \cdot] \|_{Lip} }{ \lambda} < 1$ then we have

 \begin{equation*} \begin{split}
 \|w_1 -w_2 \|_{\lambda} &  \le   \frac{1}{ (1- \frac{ \|F_N[\cdot, \cdot] \|_{Lip} }{ \lambda})}  (\|u_1-u_2\|_{BL}^* + \|F_N[\cdot, \cdot] \|_{Lip} T \| \gamma_1 - \gamma_2\|^*_{\infty}).
 \end{split} \end{equation*}

 Hence,
 \begin{equation*} \begin{split} \|\varphi(t,u_1, \gamma_) -\varphi(t, u_2, \gamma_2)\|_{BL}^* \le  \frac{e^{\lambda T}}{ (1- \frac{ \|F_N[\cdot, \cdot] \|_{Lip} }{ \lambda})}  (\|u_1-u_2\|_{BL}^* + \|F_N[\cdot, \cdot] \|_{Lip} T \| \gamma_1 - \gamma_2\|^*_{\infty}).
 \end{split} \end{equation*}

 Since $ \varphi_{NM} $ is Lipschitz separately in both $ t $ and  $( u, \gamma) $, it is Lipschitz.\\

\end{proof}

\subsubsection{Proof of Theorem \ref{main}}
\begin{proof} \begin{enumerate} \item If $  T, M >0 $, $ N \in \mathbb{N}$  by Proposition \ref{local}  there exists continuous

 $$\varphi _{NM} : [0,T] \times B_N[\textbf{0}] \times BL\cP^*[M] \rightarrow BL^* .$$

 Since $$ \R_+ \times BL^* \times BL(Q;\mathcal{P}^*) =\bigcup_{ N \in \mathbb{N}} [0,N] \times B_N[0] \times BL\cP^*[N] $$ if we define

 \begin{equation} \label{eq:union} \varphi = \cup \varphi_{NN} \end{equation} then we have our continuous

 $$ \varphi : \R_+ \times  {BL^*}  \times BL(Q;\mathcal{P}^*) \rightarrow BL^* .$$ Furthermore, if $ X = BL^* \times L(Q;\mathcal{P}^*)$ and  $ D_X[(u_1, \gamma_1), ( u_2, \gamma_2)] = \|u_1 - u_2 \|_{BL}^* + \| \gamma_1 - \gamma_2 \|_{\infty}^* ,$
 then  $ [X, D_X]$ is a metric space. Define  $$\Phi: \R_+ \times X \rightarrow X $$  by $$ \Phi(t;(u, \gamma)) =[ \varphi(t,u, \gamma), \gamma] .$$

        \item This also follows from Proposition \ref{local}. Indeed, for fixed $ u, \gamma $ there exists $ \hat{N}$ such that  $ (u,  \gamma) \in B_{\hat{N}}[0] \times BL\cP^*[\hat{N}]$ . Since differentiability is a local condition we only need to verify \eqref{MAIN}  on a finite time interval $[0,N],$   $N \ge \hat N .$ This verification is easily done if we can verify that $\varphi$ is bounded on any such time interval.

                     Indeed suppose that  $ \varphi$ is bounded  on any such time interval. Let
                     $$  N(t) = \| \varphi(t)\|_{BL}^* $$  Then if  $ M > \max \{\sup_{ t \in [0,N]} N(t), N \} $ then on $ [0,N] \times B_N[0] \times BL\cP^*[N] $
                      $$\varphi \equiv \varphi_{NN} \equiv \varphi_{MM}. $$

                      Hence  \begin{equation} \begin{split}
                      \varphi'(t,u, \gamma)  & = \varphi'_{MM}(t,u, \gamma) = F_{M}( \varphi_{MM}(t,u,\gamma), \gamma)   = F_{M} ( \varphi_{NN}(t,u,\gamma), \gamma ) \\
                        & = F ( \varphi_{NN}(t,u,\gamma), \gamma )= F ( \varphi(t,u,\gamma), \gamma).
                      \end{split} \end{equation}

                     Also obviously  $\varphi(0,u, \gamma) = u $

                     $\varphi $ is obviously bounded on any finite interval since it is actually continuous on any finite interval.

                     The argument for the following is found in the section leading up to \eqref{CONSTRAINT}.

 \begin {equation}\label{eq:convenient} \left\{\begin{array}{ll}
 \displaystyle {\Phi'}(t;x) & =  \cF [ \Phi(t;x)]    \\
\Phi(0;x)=x.
\end{array}\right.\end{equation}
So we see that  $ \Phi$  satisfies the constraint equations \eqref{CONSTRAINT}.

\item  \begin{equation*} \begin{split}
 \Phi(\R_+\times X_+)  & =  \Phi(\R_+\times BL^*_{+} \times L(Q;\cP^*))  = \bigcup_{N} \Phi( [0,N]\times B_{N+}[\textbf{0}] \times L\cP^*[N]) \\
& = \bigcup_{N}  \varphi\Bigl([0,N]\times B_{N+}[\textbf{0}] \times L\cP^*[N]\Bigr)\times L\cP^*[N] \subset \bigcup_{N} BL^*_{+}\times L\cP^*[N]\\
&  = X_+
\end{split} \end{equation*}

\item This is an immediate corollary of Proposition \ref{local} given the definition of $ [X,D_X]$ and the fact that $ \varphi$ is locally Lipschitz by Propositon \ref{local}.
  \end{enumerate}
Finally we show that $ \Phi$  is actually a semiflow on $X.$
For the first condition notice that for each $ \gamma \in L(Q;\mathcal{P}^*) $,  $\varphi(\cdot,\cdot, \gamma)$  is a semiflow \cite[ Chpt.1, pg.19]{Thi03}.

Suppose $ x =(u, \gamma) \in X $, then
  \begin{equation} \begin{split}
\Phi(t+s, x) &  =[ \varphi(t+s,u,\gamma), \gamma] = [\varphi(t, \varphi(s,u,\gamma), \gamma), \gamma]= \Phi(t, (\varphi(s,u,\gamma), \gamma)) \\
 & = \Phi(t, \Phi(s,x))\end{split} \end{equation}
The second condition is shown to be satisfied by \eqref{eq:convenient}  above.
\end{proof}

\section{Unification}

Here we demonstrate the unifying power of this method. In \cite{CLEVACK} it is demonstrated how to obtain the  discrete, absolutely continuous, selection mutation and pure selection from a measure theoretic model by a proper choice of initial condition and mutational kernel. Here we demonstrate how to obtain a measure theoretic model and hence we obtain all of the above.

\emph{Measure Valued Constraint Equation:}

  $$ B(\mu(t)(1), \cdot) \gamma(\cdot)\bullet\mu(t)[g] = \mu(t)\Bigl[ B(\mu(t)(1), \cdot) \gamma(\cdot)[g]    \Bigr] = \int_Q  B(\mu(t)(1), q) \gamma(q)[g]\mu(t)(dq)$$

 Hence \begin {equation*} \left\{\begin{array}{ll}
 \displaystyle {\mu'}(t) & = B(\mu(t)(1), \cdot) \gamma(\cdot)\bullet\mu(t) - \displaystyle  D(\mu(t)(1),\cdot)\bullet \mu(t)   \\
 & =  {F} (\mu, \gamma) \\
\mu(0)=u.
\end{array}\right.\end{equation*} becomes

\begin {equation*} \left\{\begin{array}{ll}
 \displaystyle {\mu'}(t) & = \int_Q  B(\mu(t)(1), q) \gamma(q)[\cdot]\mu(t)(dq) - \displaystyle < \int D(\mu(t)(1),q) \mu(t)(dq), \cdot>   \\
 & =  {F} (\mu, \gamma) \\
\mu(0)=u.
\end{array}\right.\end{equation*}

which is the measure valued constraint equation.

\emph{Measure Valued Integral Representation on the Cone :}

 Suppose $u $ is actually in the positive cone on measures, then notice that if   $g \in C(Q)$

\begin{equation} \begin{split}
e^{-\int _{s}^{t} D(\varphi(\tau)(1),\cdot)d\tau}\bullet F_{N1}[\varphi(s), \gamma] [g]
= F_{N1}[\varphi(s), \gamma]\Bigl ( e^{-\int _{s}^{t} D(\varphi(\tau)(1),\cdot)d\tau}g(\cdot) \Bigr )
 =\Bigl [ B( \varphi(s)(1), \cdot) \gamma(\cdot) \bullet \varphi(s) \Bigr ] \\ \Bigl ( e^{-\int _{s}^{t} D(\varphi(\tau)(1),\cdot)d\tau} g(\cdot) \Bigr ) \\
 = \varphi(s) \Bigl [   B( \varphi(s)(1), \cdot) \gamma(\cdot)[e^{-\int _{s}^{t} D(\varphi(\tau)(1),\cdot)d\tau} g(\cdot) ] \Bigr]
 = \int_Q  B( \varphi(s)(1), \hat q) \gamma(\hat q)[ e^{-\int _{s}^{t} D(\varphi(\tau)(1),\cdot)d\tau}g(\cdot) ]\varphi(s)(d\hat q) \\
 = \int_Q  B( \varphi(s)(1), \hat q)  \overline{\Delta}_{s,t, \varphi( \cdot;u, \gamma)} ( \hat q)[ g] \varphi(s)(d\hat q)
 \end{split} \end{equation}

 Hence the integral representation (see \eqref{irep})  becomes

 \begin{equation} \label{irep2}
\varphi(t,u, \gamma)   =   < \int e^{-\int _{0}^{t}D(\varphi(\tau)(Q),q)d\tau} u(dq), \cdot> 
 + \int_{0}^t \int_Q  B( \varphi(s)(Q), \hat q)  \overline{\Delta}_{s,t, \varphi( \cdot;u, \gamma)} ( \hat q)[ \cdot] \varphi(s)(d\hat q)ds
\begin{footnote}{$ {\overline{\Delta}}_{N,s,t, \alpha(\cdot;u,\gamma)}(\hat q)[g]) =\int_{Q}e^{-\int_{s}^{t} D_N(\alpha(\tau,u,\gamma)(1),q)d\tau}g(q)\gamma(\hat q)(dq) $}\end{footnote}
\end{equation}

which is exactly the integral representation for the measure valued semiflow  \cite{CLEVACK, JC3}.

 We mention one more important observation. In \cite{CLEVACK} we notice that the parameter space is  $ C(Q, \mathcal{P}_w)$, but now the parameter space is  $L\cP^*$. In order to model both pure selection and selection mutation in a continuous manner we need for the kernel  $ q \mapsto \delta_q$ to be in $L\cP^*[M]$ for some $M$. This is indeed the case as  \cite[Lemma 3.5]{HILLE},  demonstrates.

\section{Uniform Eventual Boundedness} \label{UEB}
A system $ \frac{dx}{dt} =F(x)$ is called dissipative and its solution uniformly eventually bounded, if all solutions exist for all forward times and if there exists some $c>0$ such that $$ \limsup_{t \rightarrow \infty}||x(t)||< c $$ for all solutions $x$ \cite[pg. 153]{Thi03}.

The following definitions and the assumption are taken from  the manuscript \cite{CLEVACKTHI}.
The {\em reproduction number} of strategy $q\in Q$ at population size $s$ is defined
by
\begin{equation}
\label{eq:rep-num}
\cR( s, q) = \frac{B(s,q)}{D(s,q)}.
\end{equation}
The {\em basic reproduction number} of strategy $q$ is defined by
\begin{equation}
\label{eq:rep-num-basic}
\cR_0(q) = \cR(0,q), \qquad q \in Q.
\end{equation}

The following additional assumption is made.

{ (A3)} For each $q \in Q$ with $\cR_0(q) \ge 1$, there exists a unique
$K(q) \ge 0$ such that $\cR(K(q), q)$ $ =1$.

Since (A1)-(A3) imply that the function $K(\cdot)$ is continuous, it has a maximum and
a minimum on the compact set $Q$. We define
\begin{equation}
\label{Max}
K^\diamond = \max_{ q \in Q} K(q)
\end{equation}
and  \begin{equation} \label{Min} k_{\diamond}= \min_{ q \in Q} K(q).
\end{equation}

\begin{proposition} Assume (A1)- (A3).  $  \forall x \in BL^*_+ \times L(Q; \mathcal{P}^*),$ $ \Phi(\cdot, x)$ is uniformly eventually bounded.

\end{proposition}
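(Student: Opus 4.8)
The plan is to show that the total population $\mu(t)(\mathbf{1}) = \varphi(t,u,\gamma)(1)$ satisfies a scalar differential inequality whose solutions are uniformly eventually bounded, and then to upgrade this control of the total mass to control of the full $BL^*$-norm. The key observation is that, since $\Phi$ preserves the positive cone (Theorem \ref{main}(3)) and $\mathbf{1} \in BL(Q)$, evaluating the constraint equation \eqref{MAIN} against the test function $g = \mathbf{1}$ collapses the measure-valued dynamics into a tractable scalar equation for $S(t) := \mu(t)(\mathbf{1})$, which on the positive cone coincides with $\|\mu(t)\|_{BL}^*$ (here one uses that for $\mu \in BL^*_+$ the norm is attained at $g = \mathbf{1}$, so boundedness of the total mass is \emph{equivalent} to boundedness in norm). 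This is what makes the problem genuinely finite-dimensional in disguise.

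First I would compute $S'(t)$ by testing \eqref{MAIN} against $g = \mathbf{1}$. Using $\gamma(q)[\mathbf{1}] = 1$ (since each $\gamma(q) \in \mathcal{P}^*$ represents a probability measure of total mass one), the birth term yields $\int_Q B(S(t),q)\,\mu(t)(dq)$ and the death term yields $\int_Q D(S(t),q)\,\mu(t)(dq)$, so that
\begin{equation}\label{eq:scalarineq}
S'(t) = \int_Q \bigl(B(S(t),q) - D(S(t),q)\bigr)\,\mu(t)(dq).
\end{equation}
Next I would exploit assumptions (A1)--(A3). Monotonicity of $B(\cdot,q)$ (nonincreasing) and $D(\cdot,q)$ (nondecreasing) together with the definition of $K(q)$ via $\mathcal{R}(K(q),q)=1$ gives, for $s > K^\diamond$, that $B(s,q) \le D(s,q)$ for every $q$ (the fittest strategy already lies below its carrying capacity once $s$ exceeds the maximal $K(q)$). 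Hence the integrand in \eqref{eq:scalarineq} is nonpositive whenever $S(t) > K^\diamond$, so $S'(t) \le 0$ there. To obtain a genuine \emph{strict} decay rate I would use (A2), namely $\inf_q D(0,q) = \varpi > 0$ and monotonicity of $D$, to bound $D(s,q) \ge \varpi$ from below, producing an estimate of the form $S'(t) \le (\text{birth bound}) \, S(t) - \varpi\, S(t)$ for large $S$, or more simply a comparison with a scalar logistic-type ODE whose solutions satisfy $\limsup_{t\to\infty} S(t) \le c$ for a constant $c$ depending only on $K^\diamond$ and the rate bounds.

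Having bounded $S(t)$, I would invoke global existence: since $\|\mu(t)\|_{BL}^* = S(t)$ stays bounded, the truncated vector fields $F_N$ agree with $F$ for $N$ large, so by Theorem \ref{main} the solution exists for all $t \ge 0$ and $\limsup_{t\to\infty}\|\varphi(t,u,\gamma)\|_{BL}^* \le c$ uniformly in the initial datum. Finally, because the metric $D_X$ adds the fixed quantity $\|\gamma\|_\infty^* \le 1$ (the parameter $\gamma$ is constant along the flow and lies in $L(Q;\mathcal{P}^*)$), uniform eventual boundedness of the $BL^*$-component transfers directly to uniform eventual boundedness of $\Phi(\cdot,x)$ in $[X,D_X]$. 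The main obstacle I anticipate is the step converting the sign information $S'(t)\le 0$ for $S > K^\diamond$ into a uniform $\limsup$ bound that is independent of the initial mass: one must verify that solutions starting above $K^\diamond$ actually decrease into a fixed absorbing ball rather than merely being nonincreasing, which requires a quantitative decay estimate (via $\varpi > 0$) rather than just monotonicity, and care that the comparison argument is valid for the integral functional in \eqref{eq:scalarineq} rather than a pointwise ODE.
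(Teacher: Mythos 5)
Your proposal is correct and follows essentially the same route as the paper's own proof: test the equation against $g=\mathbf{1}$, use $\gamma(q)[\mathbf{1}]=1$ and positivity (so total mass equals $\|\cdot\|_{BL}^*$), invoke (A1)--(A3) and $K^\diamond$ to get the sign condition on the derivative of the total mass, and then transfer the bound to $[X,D_X]$. In fact you are more careful than the paper on the final step, since the paper passes directly from ``nonincreasing above $K^\diamond$'' to $\limsup_{t\to\infty}\|\varphi(t)\|_{BL}^*\le K^\diamond$, whereas you correctly note that a quantitative decay estimate (using $\varpi>0$) is needed to land in a fixed absorbing ball independent of the initial mass.
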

\begin{proof} Let $ x =(u, \gamma) \in BL^*_+ \times L(Q; \mathcal{P}^*).$ Since $ u\in BL^*_+$,  $ ~ \varphi(t,u, \gamma) ~= ~ \varphi(t) ~ \in BL^*_+,  \forall t\ge 0.$
From Theorem \ref{main} we have

 \begin {equation*} \left\{\begin{array}{ll}\label{M}
 \displaystyle {\varphi'}(t) & = B(\varphi(t)(1), \cdot) \gamma(\cdot)\bullet \varphi(t) - \displaystyle  D(\varphi(t)(1),\cdot)\bullet \varphi(t)   \\
 & =  {F} (\varphi, \gamma) \\
\varphi(0,u,\gamma)=u.
\end{array}\right.\end{equation*}

Hence \begin{equation}\label{estimate} \begin{split}
 \varphi'(t)(1) & = \varphi(t)\Bigl [ B(\varphi(t)(1), \cdot) \gamma( \cdot)[1]   \Bigr ] -   \varphi(t)\Bigl[ D(\varphi(t)(1),\cdot)     \Bigr ] \\
& = \varphi(t) \Bigl [ B(\varphi(t)(1), \cdot) -  D(\varphi(t)(1),\cdot)    \Bigr ] \\
& = \varphi(t) \Bigl [ (\cR( \varphi(t)(1), \cdot) -1 )D(\varphi(t)(1),\cdot) \Bigr ].
\end{split} \end{equation}

Since  $ \varphi(t) >0 $, $ \varphi(t)(1) =  \| \varphi(t) \|_{BL}^* .$ Using \eqref{estimate}, if $ \varphi(t)(1) > K^{\di}$, then ~  $\varphi'(t)(1) \le 0 .$

Indeed $ \forall q \in Q$ ,  $ \cR (\cdot,q) $ is nonincreasing. Hence  $$\cR (\varphi(t)(1),q) \le  \cR (K^{\di},q) <1  $$

and $$ \limsup_{t \rightarrow \infty}\|\varphi(t)\|_{BL}^* \le K^{\di}  \mbox{ and }  \limsup_{ t \rightarrow \infty} \|\Phi (t;x)\|_{D_X} \le K^{\di} + 1 .$$

\end{proof}

\section{ \bf Concluding Remarks}  In this theory we model an evolutionary game as a semiflow on the metric space  $ X = BL^* \times L(Q;\cal{P}^*) $ of which $ X_+ = BL^*_+ \times L(Q;\cal{P}^*) $ is forward invariant. This model includes all of the well posedness results found in \cite{CLEVACK}.\begin{footnote} {See the list in the second to last paragraph in section 1 above.} \end{footnote} We note that on any forward invariant subspace we have a well-posed model. This includes both  $ \mathcal{M}_+ \mbox { and }  \overline{\mathcal{M}}_+ .$ We conclude that by considering the Lipschitz maps on a compact metric space and forming their dual a nice unifying theory of evolutionary games can be constructed. This elegant theory involves constructing an action $ \bullet$ that allows us to multiply a linear functional by a family of linear functionals. It is difficult to multiply two linear functionals, but it is easy to multiply a linear functional by a family of linear functionals. Moreover this multiplication behaves nicely with respect to norms, i.e. the normed product is less than or equal to the product of the norms. One should notice the length and number of estimates in this paper as compared to those in \cite{CLEVACK,JC3}.

 There are two matters of discussion that I would like to broach that arose during the construction of this manuscript.  Firstly,
 $ [M_b^*(BL;\R), \|\cdot \|_{BL}^*]$ is an extension of $BL^*$ but the set  $M_b^*(BL;\R)$  can be normed with others besides $\|\cdot \|_{BL}^* .$ In particular,
 $$  \| \mu \|_1 = \sup_{\|g\|_{BL} \le 1} |\mu(g)|, \qquad   \| \mu \|_2 = \sup_{\|g\|_{BL} = 1} |\mu(g)| . $$ These (semi) norms coincide on the subspace  $ BL^*. $ However,they do  not necessarily do so on $M_b^*(BL;\R)$. When attempting to uncover fixed points, the particular metric one uses is of utmost importance. More to the point, notice the form of the vector field $F$ in the main equation \eqref{MAIN}

 \begin {equation*} \left\{\begin{array}{ll}
 \displaystyle {\mu'}(t) & =
\Bigl (B(\mu(t)(1), \cdot) \gamma(\cdot) - \displaystyle  D(\mu(t)(1),\cdot)\Bigr )\bullet \mu(t)   \\
 & =  {F} (\mu, \gamma) \\
\mu(0)=u.
\end{array}\right.\end{equation*}

Suppose that one could find a function  $$ \mu(t) = e^{\int_0^t [B(\mu(\tau)(1), \cdot) \gamma( \cdot) - D(\mu(\tau)(1), \cdot) \delta_{(\cdot)}]d\tau} \bullet \mu(0). $$

Here   $$ F(t,\cdot)[g] = e^{\int_0^t [B(\mu(\tau)(1), \cdot) \gamma( \cdot) - D(\mu(\tau)(1), \cdot) \delta_{(\cdot)}]d\tau}[g] =  e^{\int_0^t [B(\mu(\tau)(1), \cdot) \gamma( \cdot)[g] - D(\mu(\tau)(1), \cdot) g(\cdot)]d\tau}. $$
Then notice that \textbf{formally}
\begin{equation}\begin{split}
F_{t}(t,\cdot)[g] & = e^{\int_0^t [B(\mu(\tau)(1), \cdot) \gamma( \cdot)[g] - D(\mu(\tau)(1), \cdot) g(\cdot)]d\tau}[B(\mu(t)(1), \cdot) \gamma( \cdot)[g] - D(\mu(t)(1), \cdot) g(\cdot)]\\
& = e^{\int_0^t [B(\mu(\tau)(1), \cdot) \gamma( \cdot)[\cdot] - D(\mu(\tau)(1), \cdot) ]d\tau}[g][B(\mu(t)(1), \cdot) \gamma( \cdot)[\cdot] - D(\mu(t)(1), \cdot)][g] \\
& = e^{\int_0^t [B(\mu(\tau)(1), \cdot) \gamma( \cdot)[\cdot] - D(\mu(\tau)(1), \cdot) ]d\tau} \bullet[B(\mu(t)(1), \cdot) \gamma( \cdot)[\cdot] - D(\mu(t)(1), \cdot)][g]
\end{split}\end{equation}

So if $ \mu_0$ is linear then
 $$ \mu'(t) = \mu_0( F(t,q) F_t(t,q)) = (B(\mu(t)(1), \cdot) \gamma(\cdot) - \displaystyle  D(\mu(t)(1),\cdot)\Bigr )\bullet \mu(t) $$
 So if we could extend $\bullet$ into a method of multiplying one family of functionals by another family, so as not to lose some of the linearity when linearity is needed then the solution to \eqref{MAIN} would be a exponential. The form of \eqref{MAIN} suggests such a solution and for the pure selection measure valued model, the solution is an exponential, see \cite{AFT}.
 The particular metric that one places on $M_b^*$ will decide if such a fixed point could be obtained.

  Secondly, perhaps the methodology above is not unique. Suppose that $[E,\cal C_E]$  is a pair where $E$ is isometric to a closed convex subset of a Banach Space under its $weak^*$ topology and $\cal C_E$ are constraint equations. Then when is there an extension to $ [B , \cal C_B]$ where $B$ is a Banach Space such that $E \subseteq B $ and  $\cal C_B $ are extensions of the equations in  $\cal C_E $. We want extensions with the property that the semigroup resulting from the resolution of $\cal C_B$ has $E$ as a forward invariant subset.

As far as future development of the theory there are two main paths to be considered. They are asymptotic analysis and parameter estimation. The development of the asymptotic analysis for the measure valued model is well underway in \cite{CLEVACKTHI}. It is anticipated e.g. in section \ref{UEB} that much of those results will be mirrored here as well.  So the main future focus is on parameter estimation.  \cite{AU} reveals how parameter estimation can be performed on structured population models formed on metric spaces metrized with the weak star topology.
So I hope to use the formalism found in \cite{BK} and the techniques found in \cite{AU} to develop a parameter estimation theory for these  $BL^*$ valued models. Formerly the formalism found in \cite{BK} was untenable due to the fact that the model was formed using the total variation norm, which was different from the norm of continuity of the parameter (mutation kernel). However, now this is no longer an obstacle.

\section*{References}
\bibliography{mylib}

\end{document}